\tikzstyle{vertex}=[circle, draw, inner sep=0pt, minimum size=13pt]
\newtheorem{theorem}{Theorem}[section]
\newtheorem{proposition}[theorem]{Proposition}
\newtheorem{example}[theorem]{Example}
\newtheorem{lemma}[theorem]{Lemma}
\theoremstyle{definition}
\newtheorem{definition/notation}[theorem]{Definition/Notation}
\numberwithin{equation}{section}
\begin{document}

\title[Quantized nilradicals]{ Quantized nilradicals of parabolic subalgebras
        of $\mathfrak{sl}(n)$ and algebras of coinvariants}

\author{Andrew Jaramillo and Garrett Johnson}

\address{Department of Mathematics and Physics \\North Carolina Central
University\\Durham, NC \\USA}
\email{gjohns62@nccu.edu}

\address{Department of Mathematics\\University of Connecticut\\Hartford, CT \\USA}
\email{andrew.jaramillo@uconn.edu}

\subjclass[2010]{Primary: 17B37; Secondary: 16T15, 16T20}

\keywords{quantum Schubert cell algebras, algebras of coinvariants, smash
products, quantum algebras, quantized coordinate rings, Levi decomposition}

\thanks{The second author was supported in part by NSF grant DMS-1900823.}

\begin{abstract}

    Let $P_J$ be the standard parabolic subgroup of $SL_n$ obtained by deleting
    a subset $J$ of negative simple roots, and let $P_J = L_JU_J$ be the
    standard Levi decomposition.  Following work of the first author, we study
    the quantum analogue $\theta: {\mathcal O}_q(P_J) \to{\mathcal O}_q(L_J)
    \otimes {\mathcal O}_q(P_J)$ of an induced coaction and the corresponding
    subalgebra ${\mathcal O}_q(P_J)^{\operatorname{co} \theta} \subseteq
    {\mathcal O}_q(P_J)$ of coinvariants. It was shown that the smash product
    algebra ${\mathcal O}_q(L_J)\# {\mathcal O}_q(P_J)^{\operatorname{co}
    \theta}$ is isomorphic to ${\mathcal O}_q(P_J)$. In view of this,
    ${\mathcal O}_q(P_J)^{\operatorname{co} \theta}$ --  while it is not a Hopf
    algebra -- can be viewed as a quantum analogue of the coordinate ring
    ${\mathcal O}(U_J)$.

    In this paper we prove that when $q\in \mathbb{K}$ is nonzero and not a
    root of unity, ${\mathcal O}_q(P_J)^{\operatorname{co} \theta}$ is
    isomorphic to a quantum Schubert cell algebra ${\mathcal U}_q^+[w]$
    associated to a parabolic element $w$ in the Weyl group of
    $\mathfrak{sl}(n)$. An explicit presentation in terms of generators and
    relations is found for these quantum Schubert cells.

\end{abstract}

\maketitle

\section{Introduction and overview of results in the paper}

Let $SL_n$ be the complex algebraic group of $n \times n$ matrices having
determinant equal to one, and let $P_J$ be the standard parabolic subgroup of
block upper triangular matrices of $SL_n$ obtained by deleting a subset $J$ of
negative simple roots of $SL_n$. The group $P_J$ admits a Levi decomposition
$P_J = L_JU_J$, where $L_J$ is the standard Levi factor of block diagonal
matrices in $P_J$, and $U_J$ is the unipotent subgroup of matrices in $P_J$
having identity matrices along the block diagonal. Multiplication $L_J \times
P_J \to P_J$ induces a coaction, ${\mathcal O}(P_J) \to {\mathcal O}(L_J)
\otimes {\mathcal O}(P_J)$, where ${\mathcal O}(L_J)$ and ${\mathcal O}(P_J)$
are the coordinate rings of $L_J$ and $P_J$ respectively.

With the classical case in mind, we turn our attention to the corresponding
quantized coordinated rings, ${\mathcal O}_q(L_J)$ and ${\mathcal O}_q(P_J)$.
Here and below, the base field for all algebras is an arbitrary field
$\mathbb{K}$ that contains a nonzero element $q\in \mathbb{K}$ that is not a
root of unity. Define
\[
    \widehat{q} := q - q^{-1}.
\]
Following \cite{Jaramillo 1, Jaramillo 2}, we focus on the quantum analogue of
the coaction above,
\[
    \theta: {\mathcal O}_q(P_J) \to{\mathcal O}_q(L_J) \otimes {\mathcal
    O}_q(P_J).
\]
An element $x \in {\mathcal O}_q(P_J)$ is a \textit{(left) coinvariant} if
$\theta(x) = 1 \otimes x$. It was shown in \cite[Theorems 3.46 and
3.49]{Jaramillo 2} that the subalgebra of coinvariants ${\mathcal
O}_q(P_J)^{\operatorname{co} \theta} \subseteq {\mathcal O}_q(P_J)$ has a
presentation as an iterated Ore extension $\mathbb{K}[t_1][t_2; \tau_2,
\delta_2]\cdots [t_M; \tau_M, \delta_M]$, where $M = \operatorname{dim}(U_J)$
and is, in fact, a Cauchon-Goodearl-Letzter extension.  It was also shown that
the smash product algebra ${\mathcal O}_q(L_J) \# {\mathcal
O}_q(P_J)^{\operatorname{co} \theta}$ is isomorphic as a $\mathbb{K}$-algebra
to ${\mathcal O}_q(P_J)$ \cite[Theorem 3.19]{Jaramillo 2}.  In view of this,
${\mathcal O}_q(P_J)^{\operatorname{co} \theta}$ -- while it is not a Hopf
algebra -- can be viewed as a quantized version of the coordinate ring
${\mathcal O}(U_J)$.  In fact, if $q$ is put equal to $1$ in the defining
relations of ${\mathcal O}_q(P_J)^{\operatorname{co} \theta}$, we recover the
defining relations of ${\mathcal O}(U_J)$.

The generators of ${\mathcal O}_q(P_J)^{\operatorname{co} \theta}$ can be
indexed by elements of the set
\[
    \Phi_J := \{ (i, j) \in \{1,\dots, n\} \times \{1,\dots, n\} \mid
    \exists k\in J \text{ such that } i \leq k < j\}.
\]
We will denote the generators of ${\mathcal O}_q(P_J)^{\operatorname{co}
\theta}$ by $u_{ij}$, ($(i, j) \in \Phi_J$). Each $u_{ij}$ is a certain ratio
of quantum minors in the quantized coordinate ring ${\mathcal O}_q(P_J)$ (see
\ref{ratio of quantum minors} in Section \ref{algebra of coinvariants,
section}).  Viewing $u_{ij}$ as occupying the ($i, j$)-position in an $n\times
n$ array, we observe that the full set of generators of ${\mathcal
O}_q(P_J)^{\operatorname{co} \theta}$ forms a block upper triangular shape that
depends on $J$.  We define the function $r: \{1,\dots, n\} \to J \cup \{0 \}$,
\[
    r(m) := \operatorname{max}\{k \in J \cup \{0\} \mid k < m \}.
\]
This function records which block a generator $u_{ij}$ occupies. For instance,
$u_{ij}$ and $u_{\ell m}$ belong to the same block if and only if $r(i) =
r(\ell)$ and $r(j) = r(m)$.

The simple positive roots of $\mathfrak{sl}(n)$ ($= \operatorname{Lie}(SL_n)$)
and simple reflections in the Weyl group $W$ will be denoted by $\alpha_i$,
$s_i$, ($i\in \{1,\dots, n - 1\}$), respectively. Fix a subset
\[
    J \subseteq \{1,\dots, n - 1\},
\]
and let $W^J \subseteq W$ be the subgroup of $W$ generated by the simple
reflections $\{s_{i} | i \notin J \}$.  Let $w_0$ and $w_0^J$ denote the
longest elements in $W$ and $W^J$ respectively. The corresponding parabolic
element of the Weyl group will be denoted by
\[ w_J :=
	w_0^Jw_0 \in W.
\]
The element $w_J$ can be characterized as the unique element of the Weyl group
such that the set of roots $\Delta_{w_J} := \Delta_+ \cap w_J(\Delta_-)$
coincides with the set of positive roots $\{\beta \in \Delta_+ \mid \beta \geq
\alpha_j \text{ for some } j\in J\}$.

The specific form for the commutation relation between a pair of generators,
say $u_{ij}$ and $u_{\ell m}$, in depends on the relative ordering on $i$, $j$,
$\ell$, $m$, $r(j)$, $r(\ell)$, and $r(m)$, and in some cases on the relative
ordering of $w_J^{-1}(i)$, $w_J^{-1}(\ell)$, $w_0^J(i)$, and $w_0^J(\ell)$,
where we have tacitly identified the Weyl group of $\mathfrak{sl}(n)$ with the
symmetric group on $\{1,\dots, n\}$.

\begin{theorem} \cite[Theorems 3.35 and 3.50]{Jaramillo 2}
    \label{intro, defining relns, algebra of coinvariants}

    The algebra ${\mathcal O}_q(P_J)^{\operatorname{co}\theta}$ is generated by
    $u_{ij}$ (for $(i, j) \in \Phi_J$) and has the following defining
    relations:
    \begin{align}
        u_{ij}u_{\ell m} &=
            \begin{cases}
                qu_{\ell m}u_{ij}
                &
                $\begin{tabularx}{\textwidth}{l}
                    \hspace{5mm}
                    $(\ell = i < j < m)$
                    \\
                    \hspace{5mm}
                    or $(j = m \text{ and } w_J^{-1}(\ell) < w_J^{-1}(i))$
                \end{tabularx}$
                \\
                \\
                u_{\ell m}u_{ij}
                &
                $\begin{tabularx}{\textwidth}{l}
                    \hspace{5mm}
                    $(w_0^J(\ell) < w_0^J(i) < j < m)$
                    \\
                    \hspace{5mm}
                    or
                    $(i < j < \ell < m)$
                    \\
                    \hspace{5mm}
                    or
                    $(i \leq r(j) < \ell < j < m)$
                \end{tabularx}$
                \\
                \\
                u_{\ell m}u_{ij} + \widehat{q}u_{\ell j}u_{im}
                &
                $\begin{tabularx}{\textwidth}{l}
                    \hspace{5mm}
                    $(r(i) < \ell < i < j < m)$
                    \\
                    \hspace{5mm}
                    or
                    $(i \leq r(\ell) < \ell \leq r(j) < j < m)$
                \end{tabularx}$
                \\
                \\
                q^{-1}u_{\ell m}u_{ij} - \widehat{q}u_{(im), \ell}
                &
                $\begin{tabularx}{\textwidth}{l}
                    \hspace{5mm}
                    $(i < j = \ell < m)$
                \end{tabularx}$
            \end{cases}
        \end{align}
        \noindent where $u_{(im), \ell} := (-q)^{r(\ell) -
        w_0^J(\ell)} u_{im} + \sum_{r(\ell) < k < w_0^J(\ell)}(-q)^{\ell -
        w_0^J(k)} u_{w_0^J(k),m} u_{i,w_0^J(k)}$.

\end{theorem}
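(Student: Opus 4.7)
The claim decomposes into three parts: (i) the family $\{u_{ij}\}_{(i,j)\in\Phi_J}$ generates ${\mathcal O}_q(P_J)^{\operatorname{co}\theta}$ as a $\mathbb{K}$-algebra; (ii) each listed identity holds; (iii) the listed identities form a complete set of defining relations.

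For (ii), the plan is direct computation inside ${\mathcal O}_q(P_J) \subseteq {\mathcal O}_q(SL_n)$. Since each $u_{ij}$ is defined (see \ref{ratio of quantum minors}) as a ratio of quantum minors, I would apply the standard quasi-commutation identities for quantum minors (following Parshall--Wang and Noumi--Yamada--Mimachi) to move $u_{ij}$ past $u_{\ell m}$. The four cases correspond to how the row and column index sets of the underlying minors interact (disjoint, nested, or overlapping in a single row or column), and the statistics $r(\cdot)$, $w_J^{-1}$, $w_0^J$ record precisely which regime applies and how the ratio-of-minors correction factors cancel. The last case is the most delicate: the correction term $\widehat{q}\,u_{(im),\ell}$ emerges from a quantum Laplace/Desnanot expansion along the overlapping index, with the summation over $r(\ell) < k < w_0^J(\ell)$ in the definition of $u_{(im),\ell}$ indexing precisely the intermediate rows contributing to that expansion, and the powers of $(-q)$ reflecting the standard signs in the quantum Laplace formula.

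For (i), I would invoke the iterated Ore extension presentation of ${\mathcal O}_q(P_J)^{\operatorname{co}\theta}$ from \cite[Theorem 3.46]{Jaramillo 2}: the algebra is a CGL extension $\mathbb{K}[t_1][t_2;\tau_2,\delta_2]\cdots[t_M;\tau_M,\delta_M]$ with $M = \dim U_J = |\Phi_J|$. Choosing a total order $\Phi_J = \{(i_1,j_1),\dots,(i_M,j_M)\}$ compatible with the Ore tower (for instance lexicographically along a suitable reduced expression of $w_J$), I would show by induction that the leading term of $u_{i_k,j_k}$, with respect to the PBW filtration determined by the first $k-1$ Ore generators, is a nonzero scalar multiple of $t_k$ modulo lower filtered degree. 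Since the number of $u_{ij}$'s equals the length of the Ore tower, this forces $\{u_{ij}\}$ to generate the coinvariant subalgebra.

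For (iii), I would apply a PBW / Diamond-Lemma argument. Each relation in (ii) is oriented as ``larger $\cdot$ smaller $=$ (scalar) $\cdot$ smaller $\cdot$ larger $+$ lower degree terms'' with respect to an appropriately chosen total order $\prec$ on $\Phi_J$. Hence the abstract algebra $A$ presented by these generators and relations is spanned over $\mathbb{K}$ by ordered monomials in the $u_{ij}$'s. The natural surjection $A \twoheadrightarrow {\mathcal O}_q(P_J)^{\operatorname{co}\theta}$ sends these ordered monomials to the PBW basis coming from the CGL tower in (i), so the two sides have equal Hilbert series and the surjection is an isomorphism. The principal obstacle is combinatorial: one must exhibit a single total order on $\Phi_J$ (built from $J$, $w_J$, $w_0^J$, and the block function $r$) under which every one of the four cases is simultaneously a valid reduction rule, and then verify that no triple of generators produces an unresolved ambiguity requiring additional relations -- a bookkeeping task that is tightly intertwined with the case analysis of (ii).
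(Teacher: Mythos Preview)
The paper does not prove this theorem at all: it is quoted verbatim from \cite[Theorems 3.35 and 3.50]{Jaramillo 2} and used as a black box input to the main result (Theorem~\ref{intro, main theorem}). The only proof work done in the present paper concerns the companion statement about ${\mathcal U}_q(\mathfrak{n}_J)$ (Theorem~\ref{intro, defining relns of QSC}), carried out in Section~\ref{proof of defining relns, section} via Lusztig's braid symmetries. So there is no ``paper's own proof'' to compare against.

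That said, your outline is a reasonable reconstruction of how such a result would be established, and in fact it hews close to the structure reported from \cite{Jaramillo 2}. Two remarks. First, your step~(i) is more elaborate than necessary: according to the paper (paragraph after Theorem~\ref{defining relns, algebra of coinvariants}), the Ore generators $t_k$ in \cite[Theorem 3.46]{Jaramillo 2} already \emph{are} the $u_{ij}$, listed in the explicit order ``$u_{ij}\prec u_{\ell m}$ iff $w_0^J(i)<w_0^J(\ell)$, or $i=\ell$ and $j<m$''. So generation is immediate, and this same order is the one you want for~(iii). Second, be careful about logical dependence: you invoke \cite[Theorem 3.46]{Jaramillo 2} to prove \cite[Theorems 3.35 and 3.50]{Jaramillo 2}, and without the thesis in hand you cannot be sure which of these results comes first there. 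The cleaner self-contained route for~(ii) is indeed the quantum-minor calculus you sketch, but for~(i) and~(iii) one typically proves the Ore/CGL structure directly (which simultaneously gives generation and a PBW basis), and then reads off the explicit $\tau_k,\delta_k$ to obtain the displayed relations --- rather than the other way around.
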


We prove that ${\mathcal O}_q(P_J)^{\operatorname{co} \theta}$ is isomorphic to
a quantized nilradical of a parabolic subalgebra of $\mathfrak{sl}(n)$ (Theorem
\ref{intro, main theorem}).  In proving this result we first construct
presentations for the quantized nilradicals.  Then we compare these
presentations with the presentations of ${\mathcal O}_q(P_J)^{\operatorname{co}
\theta}$.

Quantized nilradicals belong to a larger family of algebras called quantum
Schubert cell algebras, which were introduced by De Concini, Kac, and Procesi
\cite{DKP} and Lusztig \cite{L}. Quantum Schubert cells play important roles in
ring theory \cite{MC, Y}, crystal/canonical basis theory \cite{Kashiwara,
Lusztig}, and cluster algebras \cite{GLS, Goodearl Yakimov}.  For a complex
semisimple Lie algebra $\mathfrak{g}$ with a root system $\Delta = \Delta_-
\sqcup \Delta_+$, triangular decomposition $\mathfrak{g} = \mathfrak{n}_-
\oplus \mathfrak{h} \oplus \mathfrak{n}_+$, and $w$ an element in the Weyl
group $W_{\mathfrak{g}}$, the corresponding quantum Schubert cell algebras
${\mathcal U}_q^\pm[w]$ are quantizations of the universal enveloping algebra
${\mathcal U}(\mathfrak{n}_{\pm} \cap
\operatorname{ad}(w)(\mathfrak{n}_{\mp}))$. The standard presentation of
${\mathcal U}_q^{\pm}[w]$ typically involves a generating set of variables
$\{X_{\beta}\}$ indexed by roots $\beta$ in $\Delta_w := \Delta_{\pm} \cap
w.(\Delta_{\mp})$.  With respect to a convex order on the roots in $\Delta_w$,
ordered monomials form a basis of ${\mathcal U}_q^{\pm}[w]$.

The quantum Schubert cell algebras of interest in this paper are those of the
form ${\mathcal U}_q^+[w_J]$, where $w_J$ is a parabolic element in the Weyl
group and $\mathfrak{g} = \mathfrak{sl}(n)$. Algebras of this type are
quantizations of nilradicals $\mathfrak{n}_J$ of parabolic subalgebras
$\mathfrak{p}_J \subseteq \mathfrak{sl}(n)$. We will refer to these particular
quantum Schubert cell algebras as \textit{quantized nilradicals} and denote
them by ${\mathcal U}_q(\mathfrak{n}_J)$.

In Theorem \ref{defining relns in q-Schubert cell}, we give a presentation of
the quantized nilradical ${\mathcal U}_q(\mathfrak{n}_J)$. In the extremal
case, when $J$ is the empty set, we have ${\mathcal
U}_q(\mathfrak{n}_{\emptyset}) \cong \mathbb{K}$. At the other extreme,
${\mathcal U}_q(\mathfrak{n}_{\{1,\dots, n - 1\}}) \cong {\mathcal
U}_q(\mathfrak{n}_+)$.  When $J$ is a singleton, say $J = \{p\}$, we have an
isomorphism ${\mathcal U}_q(\mathfrak{n}_{\{p\}}) \cong {\mathcal O}_q(M_{p, n
- p})$, where ${\mathcal O}_q(M_{p, n - p})$ is the algebra of quantum $p
\times (n - p)$ matrices.  In any case, the roots in $\Delta_{w_J}$, as well as
the generators of the algebra ${\mathcal U}_q(\mathfrak{n}_J)$, can be indexed
by $\Phi_J$. We will denote the generators of ${\mathcal U}_q(\mathfrak{n}_J)$
by
\[
    X_{ij}, (i, j) \in \Phi_J.
\]

In Section \ref{proof of defining relns, section}, we prove the following
theorem.

\begin{theorem}
    \label{intro, defining relns of QSC}

    The quantized nilradical ${\mathcal U}_q(\mathfrak{n}_J)$ is generated by
    the root vectors $X_{ij}$ $((i, j) \in \Phi_J)$ and has the following
    defining relations:
    \begin{align}
        X_{ij}X_{\ell m} &=
            \begin{cases}
                qX_{\ell m}X_{ij}
                &
                $\begin{tabularx}{\textwidth}{l}
                    $(\ell < i \text{ and } j = m)$
                    \\
                    or
                    $(\ell = i \text{ and } w_J^{-1}(j) <
                    w_J^{-1}(m))$
                \end{tabularx}$
                \\
                \\
                X_{\ell m}X_{ij}
                &
                $\begin{tabularx}{\textwidth}{l}
                    $(\ell < i < w_0^J(j) < w_0^J(m))$
                    \\
                    or
                    $(\ell < m < i < j)$
                    \\
                    or
                    $(\ell \leq r(m) < i < m < j)$
                \end{tabularx}$
                \\
                \\
                X_{\ell m}X_{ij} + \widehat{q}X_{\ell j}X_{im}
                &
                $\begin{tabularx}{\textwidth}{l}
                    $(\ell < i \leq r(m) < j < m)$
                    \\
                    or
                    $(\ell < i \leq r(m) < m \leq r(j) < j)$
                \end{tabularx}$
                \\
                \\
                q^{-1}X_{\ell m}X_{ij} + X_{(\ell j), m}
                &
                $\begin{tabularx}{\textwidth}{l}
                    $(\ell < m = i < j)$
                \end{tabularx}$
            \end{cases}
        \end{align}
        \noindent where $X_{(\ell j), m} := (-q)^{m - r(m) - 1} X_{\ell j} +
        \widehat{q}\sum_{r(m) < k < m}(-q)^{m - k - 1} X_{kj} X_{\ell k}$.

\end{theorem}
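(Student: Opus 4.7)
The plan is to exploit the standard PBW and Levendorskii--Soibelman machinery available for quantum Schubert cell algebras $\mathcal{U}_q^+[w_J]$. First I would fix a reduced expression for $w_J=w_0^Jw_0$ adapted to the parabolic structure, built by concatenating a reduced expression for $w_0$ with one for $w_0^J$ compatible with the Levi block decomposition determined by $J$. Applying Lusztig's braid automorphisms $T_i$ on $\mathcal{U}_q(\mathfrak{sl}(n))$ along this word produces a family of root vectors, which I would label $X_{ij}$ for $(i,j)\in\Phi_J$ in such a way that $X_{ij}$ corresponds to the positive root $\alpha_{ij}:=\alpha_i+\alpha_{i+1}+\cdots+\alpha_{j-1}\in\Delta_{w_J}$. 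The associated convex order on $\Delta_{w_J}$ together with the De Concini--Kac--Procesi--Lusztig PBW theorem then equips $\mathcal{U}_q(\mathfrak{n}_J)$ with a $\mathbb{K}$-basis of ordered monomials in the $X_{ij}$.

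Next I would extract the commutation relations from the Levendorskii--Soibelman straightening identity: for any $\beta\prec\gamma$ in the convex order, the $q$-bracket $X_\gamma X_\beta-q^{(\beta,\gamma)}X_\beta X_\gamma$ lies in the subalgebra generated by the root vectors $X_{\beta'}$ with $\beta\prec\beta'\prec\gamma$ and is homogeneous of weight $\beta+\gamma$. Because $\mathfrak{sl}(n)$ is simply laced, the correction term is either zero, a single quadratic monomial, or (when $\beta+\gamma$ itself lies in $\Delta_{w_J}$ but admits nontrivial decompositions into intermediate roots) a more intricate polynomial. A root-theoretic case analysis on $\Phi_J$ based on the inner products $(\alpha_{ij},\alpha_{\ell m})$, together with whether $\alpha_{ij}+\alpha_{\ell m}$ is itself a root of $\Delta_{w_J}$, partitions all unordered pairs of generators into the four regimes appearing in the theorem and pins down the scalars $q$, $1$, $q^{-1}$ on the right-hand side as well as the shape of each correction.

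The main obstacle is the last case $\ell<m=i<j$, where $X_{(\ell j),m}$ is an inhomogeneous sum of a linear term $(-q)^{m-r(m)-1}X_{\ell j}$ and a quadratic sum $\widehat{q}\sum_{r(m)<k<m}(-q)^{m-k-1}X_{kj}X_{\ell k}$ supported on the Levi block $\{r(m)+1,\ldots,m\}$ containing $m$. Here I would argue by induction on the block size $m-r(m)$: the base case $m-r(m)=1$ reduces to a direct quantum $\mathfrak{sl}(3)$ calculation giving $X_{ij}X_{\ell i}=q^{-1}X_{\ell i}X_{ij}+X_{\ell j}$, matching $X_{(\ell j),m}=X_{\ell j}$, while the inductive step applies the braid automorphism $T_k$ for a simple reflection $s_k$ in the block to shift the relation onto an adjacent root, producing at each step a factor of $-q$ in the leading term together with a Serre-type correction $\widehat{q}\,X_{kj}X_{\ell k}$ that accumulates into the stated alternating sum. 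Once all four regimes are verified, the resulting relations form a complete presentation: any additional relation would force a nontrivial linear dependence among ordered PBW monomials, which is impossible.
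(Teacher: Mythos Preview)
Your overall architecture is correct and matches the paper's: use the De Concini--Kac--Procesi--Lusztig PBW basis together with the Levendorskii--Soibelman straightening rule to reduce the problem to verifying the listed commutation relations, then argue that no further relations can hold because of the PBW basis, and handle the last case $\ell<m=i<j$ by induction on $m-r(m)$. The paper does exactly this.

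There is, however, a genuine gap in your treatment of the third regime (the $\widehat q$ cases). The LS rule and the inner products $(\alpha_{ij},\alpha_{\ell m})$ determine only the leading scalar $q^{(\alpha_{ij},\alpha_{\ell m})}$ and the \emph{support} of the correction term (it lies in the span of monomials in root vectors strictly between the two, of the correct total weight). They do not pin down the coefficient $\widehat q$ in front of $X_{\ell j}X_{im}$; that number has to be computed. Your proposal does not say how. The paper supplies the missing engine: it first proves that every $X_{ij}$ equals an explicit nested $q^{-1}$-commutator $\mathbf E_{r(j),r(j)-1,\dots,i,r(j)+1,\dots,j-1}$ in the Chevalley generators, then records how the Lusztig symmetries $T_k$ act on such commutators. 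With those lemmas in hand, for each regime it exhibits a specific braid automorphism $\psi$ sending the four root vectors $X_{ij},X_{\ell m},X_{\ell j},X_{im}$ simultaneously to a rank-$3$ configuration like $(E_r,\mathbf E_{r,r-1,r+1},\mathbf E_{r,r-1},\mathbf E_{r,r+1})$, where the identity $E_r\mathbf E_{r,r-1,r+1}-\mathbf E_{r,r-1,r+1}E_r=\widehat q\,\mathbf E_{r,r-1}\mathbf E_{r,r+1}$ is checked by hand from the $q$-Serre relations. That $\mathfrak{sl}(3)$ calculation is exactly what produces $\widehat q$; without an analogue of it your argument is incomplete. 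A smaller point: a reduced expression for $w_J=w_0^Jw_0$ is not obtained by concatenating reduced words for $w_0^J$ and $w_0$ (the lengths subtract, not add); the paper uses an explicit block-adapted reduced word for $w_J$ directly.
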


For a subset $J \subseteq \{1,\dots, n - 1\}$, define
\[
    \widetilde{J} := \{ n - j \mid j \in J \} \subseteq \{1,\dots, n - 1\}.
\]
There is a one-to-one correspondence between the generators of ${\mathcal
U}_q(\mathfrak{n}_J)$ and ${\mathcal U}_q(\mathfrak{n}_{\widetilde{J}})$ given
by ``reflecting about the anti-diagonal'': $X_{ij} \longleftrightarrow
X_{w_0(j), w_0(i))}$. Using the defining relations of ${\mathcal
U}_q(\mathfrak{n}_J)$, we can easily verify that there are algebra isomorphisms
\begin{equation}
    \label{isomorphisms of QSCs}
    {\mathcal U}_q(\mathfrak{n}_J) \stackrel{\cong}{\longrightarrow} {\mathcal
    U}_q(\mathfrak{n}_{\widetilde{J}})^{\operatorname{op}},
    \hspace{10mm}
    {\mathcal U}_q(\mathfrak{n}_J) \stackrel{\cong}{\longrightarrow} {\mathcal
    U}_{q^{-1}}(\mathfrak{n}_J)^{\operatorname{op}},
\end{equation}
\noindent given by $X_{ij} \mapsto X_{w_{\widetilde{J}}(j),
w_{\widetilde{J}}(i)}$ and $X_{ij} \mapsto -qX_{ij}$, respectively, for all
$(i, j) \in \Phi_J$.

Since we have presentations for ${\mathcal U}_q(\mathfrak{n}_J)$ and ${\mathcal
O}_q(P_J)^{\operatorname{co}\theta}$ (Theorems \ref{intro, defining relns,
algebra of coinvariants} and \ref{intro, defining relns of QSC}), it is routine
to verify that there is an algebra isomorphism
\begin{equation}
    \label{qsc --> Oq(PJ)}
    {\mathcal U}_q(\mathfrak{n}_J) \stackrel{\cong}{\longrightarrow} {\mathcal
    O}_{q^{-1}}(P_J)^{\operatorname{co}\theta}
\end{equation}
\noindent defined by
\begin{align*}
X_{ij} \mapsto \frac{(-1)^{w_0^J(j) -
w_0^{J}(i)}}{\widehat{q}} u_{w_0^J(i), w_0^J(j)}
\end{align*}
for all $(i,j) \in
\Phi_J$.  Composing the isomorphisms of \ref{isomorphisms of QSCs} with the
isomorphism \ref{qsc --> Oq(PJ)} above gives us the following theorem.

\begin{theorem}
    \label{intro, main theorem}

    There is an algebra isomorphism
    \[
        \Psi: {\mathcal U}_q(\mathfrak{n}_{\widetilde{J}}) \to {\mathcal
        O}_q(P_J)^{\operatorname{co} \theta}
    \]
    \noindent given by $X_{ij} \mapsto \frac{q (-1)^{i + j}}{\widehat{q}}
    u_{w_0(j), w_0(i)}$ for all $(i, j) \in \Phi_{\widetilde{J}}$.

\end{theorem}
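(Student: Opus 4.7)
The plan is to obtain $\Psi$ as a composition of the three preliminary isomorphisms in \ref{isomorphisms of QSCs} and \ref{qsc --> Oq(PJ)}, once these have been established. The bulk of the work is the verification of those three isomorphisms; in each case one compares the presentations from Theorems \ref{intro, defining relns, algebra of coinvariants} and \ref{intro, defining relns of QSC} case by case and checks that the proposed map on generators respects the defining relations, with bijectivity then being automatic from the PBW-type bases on both sides.

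For \ref{qsc --> Oq(PJ)}, I would define $\phi(X_{ij}) := \frac{(-1)^{w_0^J(j) - w_0^J(i)}}{\widehat{q}} u_{w_0^J(i), w_0^J(j)}$ and run through the four cases of Theorem \ref{intro, defining relns of QSC}. Since $w_0^J$ permutes each block of the block structure recorded by $r$, the map $(i,j) \mapsto (w_0^J(i), w_0^J(j))$ turns each inequality condition in Theorem \ref{intro, defining relns of QSC} into the corresponding condition in Theorem \ref{intro, defining relns, algebra of coinvariants}; the factor $\widehat{q}^{-1}$ accounts for the $\widehat{q}$ that appears on the right-hand side of the quadratic relations, and the sign $(-1)^{w_0^J(j) - w_0^J(i)}$ reconciles the definitions of $X_{(\ell j), m}$ and $u_{(im), \ell}$. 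The two isomorphisms of \ref{isomorphisms of QSCs} are handled analogously: the reflection $X_{ij} \mapsto X_{w_{\widetilde{J}}(j), w_{\widetilde{J}}(i)}$ is checked via the symmetry of Theorem \ref{intro, defining relns of QSC} under the anti-diagonal exchange of $J$ with $\widetilde{J}$, combined with passage to the opposite algebra, and the rescaling $X_{ij} \mapsto -qX_{ij}$ is a direct comparison of the $q$- and $q^{-1}$-relations using $\widehat{q^{-1}} = -\widehat{q}$.

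With the three isomorphisms verified, I would assemble $\Psi$ as the composition
\[
    {\mathcal U}_q(\mathfrak{n}_{\widetilde{J}})
    \longrightarrow
    {\mathcal U}_{q^{-1}}(\mathfrak{n}_{\widetilde{J}})^{\operatorname{op}}
    \longrightarrow
    {\mathcal U}_{q^{-1}}(\mathfrak{n}_J)
    \longrightarrow
    {\mathcal O}_q(P_J)^{\operatorname{co}\theta},
\]
where the first arrow is the second isomorphism of \ref{isomorphisms of QSCs} applied with $J$ replaced by $\widetilde{J}$, the middle arrow is the first isomorphism of \ref{isomorphisms of QSCs} at $q^{-1}$ with $J$ replaced by $\widetilde{J}$ (viewed via $\widetilde{\widetilde{J}} = J$ as an algebra iso ${\mathcal U}_{q^{-1}}(\mathfrak{n}_{\widetilde{J}})^{\operatorname{op}} \to {\mathcal U}_{q^{-1}}(\mathfrak{n}_J)$), and the final arrow is \ref{qsc --> Oq(PJ)} with $q$ replaced by $q^{-1}$. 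Tracking $X_{ij}$ through the chain and using $w_0^J w_J = w_0$, $w_0(k) = n + 1 - k$ (whence $w_0(i) - w_0(j) = j - i$), and $\widehat{q^{-1}} = -\widehat{q}$, yields the image $\frac{q(-1)^{i+j}}{\widehat{q}} u_{w_0(j), w_0(i)}$, exactly as in the theorem. The principal obstacle is the case bookkeeping for \ref{qsc --> Oq(PJ)}: one must verify not only that the four inequality regimes of Theorem \ref{intro, defining relns of QSC} transport to the four regimes of Theorem \ref{intro, defining relns, algebra of coinvariants} under $w_0^J$, but also that the correction terms $X_{(\ell j), m}$ and $u_{(im), \ell}$ match up after the sign and scalar bookkeeping; once those checks are in hand, the composition and final formula reduce to routine substitution.
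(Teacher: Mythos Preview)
Your proposal is correct and follows exactly the paper's approach: the paper also obtains $\Psi$ by composing the two isomorphisms of \ref{isomorphisms of QSCs} with \ref{qsc --> Oq(PJ)}, declaring each of these ``routine'' or ``easy'' to verify from the presentations in Theorems \ref{intro, defining relns, algebra of coinvariants} and \ref{intro, defining relns of QSC}. Your composition chain, the tracking of $X_{ij}$ via $w_0^J w_J = w_0$ and $\widehat{q^{-1}} = -\widehat{q}$, and the identification of the case-by-case bookkeeping as the main labor all match the paper's treatment precisely.
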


\noindent In the extremal case, when $J = \{1,\dots, n - 1\}$, this isomorphism
appears in \cite[Theorem 17]{Jaramillo 1}.

\section{Quantized nilradicals of parabolic subalgebras of $\mathfrak{sl}(n)$}

\subsection{The quantum enveloping algebra ${\mathcal U}_q(\mathfrak{g})$}

Let $\mathfrak{g}$ be a complex semisimple Lie algebra $\mathfrak{g}$ of rank
$\ell$, and let $\Pi = \{\alpha_1,\dots, \alpha_\ell\}$ be a base of simple
roots with respect to a Cartan subalgebra $\mathfrak{h} \subseteq
\mathfrak{g}$.  We will denote the root lattice by $Q = \mathbb{Z}\Pi$. Here
and below, for $p \in \mathbb{N}$, we put $[p]:=\{1,\dots, p\}$. We will denote
the Chevalley generators of the quantum universal enveloping algebra ${\mathcal
U}_q(\mathfrak{g})$ by
\[
    E_i, F_i, (i \in [\ell]), \text{ and } K_\mu (\mu\in Q).
\]
\noindent The algebra ${\mathcal U}_q(\mathfrak{g})$ has a triangular
decomposition
\[
    {\mathcal U}_q(\mathfrak{g}) \cong {\mathcal U}_q^-(\mathfrak{g}) \otimes
    {\mathcal U}_q^0(\mathfrak{g}) \otimes {\mathcal U}_q^+(\mathfrak{g}),
\]
\noindent where ${\mathcal U}_q^+(\mathfrak{g})$ is the subalgebra generated by
the $E_i$'s, ${\mathcal U}_q^-(\mathfrak{g})$ is the subalgebra generated by
the $F_i$'s, and ${\mathcal U}_q^0(\mathfrak{g})$ is the subalgebra generated
by the $K_\mu$'s (see e.g. \cite{CP, Jantzen, KS}).

While quantum enveloping algebras can be associated to semisimple Lie algebras,
or more generally Kac-Moody Lie algebras, we focus on the case when
$\mathfrak{g}$ is the special linear Lie algebra $\mathfrak{sl}(n)$. We use the
standard realization of $\mathfrak{sl}(n)$ as the Lie algebra of traceless $n
\times n$ matrices.  Let $\mathfrak{h}$ be the Cartan subalgebra of diagonal
matrices in $\mathfrak{sl}(n)$. The simple roots are $\alpha_i = e_i - e_{i +
1}\in \mathfrak{h}^*$ ($i \in [n - 1]$), where $e_\ell$ ($\ell \in [n]$) is the
linear functional on $\mathfrak{h}$ that returns the $\ell$-th entry along the
diagonal.  Let $\langle, \rangle$ be the symmetric bilinear form on
$\mathfrak{h}^*$ defined by the rule $\langle e_i,e_j\rangle = \delta_{ij}$ for
all $i,j\in [n]$.  The quantum enveloping algebra ${\mathcal
U}_q(\mathfrak{sl}(n))$ is the associative $\mathbb{K}$-algebra generated by
$F_i, E_i, K_\mu$ ($i\in [n - 1]$, $\mu\in Q$), and has the defining relations
\begin{align}
    &K_0 = 1,
    & &K_\mu K_\rho=K_{\mu+\rho},
    \\
    &K_\mu E_i = q^{\left< \mu ,\alpha_i\right>} E_iK_\mu,
    & &K_\mu F_i = q^{-\left< \mu ,\alpha_i\right>} F_iK_\mu,
    \\
    &E_iF_j =
    F_jE_i+\delta_{ij}\frac{K_{\alpha_i}-K_{-\alpha_i}}{\widehat{q}},
    & &
\end{align}
\noindent for all $i,j\in [n - 1]$ and $\mu, \rho \in Q$, together with the
$q$-Serre relations
\begin{align}
    &\label{q-Serre 2}E_i^2E_j - (q+ q^{-1})E_iE_jE_i + E_jE_i^2 = 0, &(|i - j|
    = 1),\\
    &F_i^2F_j - (q+ q^{-1})F_iF_jF_i + F_jF_i^2 = 0, &(|i - j| = 1),\\
    &\label{q-Serre 1}
    E_iE_j = E_jE_i, \hspace{1cm} F_iF_j = F_jF_i, &(|i - j | > 1).
\end{align}

\subsection{The symmetries $\mathbf{T_i}$}

The Weyl group and braid group of $\mathfrak{g}$ will be denoted respectively
by
\[
    W_\mathfrak{g} = \langle s_1, \dots, s_\ell\rangle, \hspace{5mm}  {\mathcal
    B}_\mathfrak{g} = \langle T_1,\dots T_\ell\rangle.
\]
\noindent If $w = s_{i_1}\cdots s_{i_m} \in W_\mathfrak{g}$ is a reduced
expression, we use the abbreviation
\[
    T_w := T_{i_1}\cdots T_{i_m}.
\]
\noindent In \cite[Section 37.1.3]{L}, Lusztig defines an action of the braid
group ${\mathcal B}_{\mathfrak{g}}$ via algebra automorphisms on ${\mathcal
U}_q(\mathfrak{g})$.  In fact, Lusztig defines the symmetries $T_{i,1}^\prime$,
$T_{i,-1}^{\prime}$, $T_{i,1}^{\prime\prime}$, and $T_{i,-1}^{\prime\prime}$
($i \in [\ell]$). By \cite[Proposition 37.1.2]{L}, these are automorphisms of
${\mathcal U}_q(\mathfrak{g})$, while by \cite[Theorem 39.4.3]{L} they satisfy
the braid relations. The following proposition is a key property of the braid
group action (see e.g. \cite[Proposition 8.20]{Jantzen}).

\begin{proposition}

    \label{braid action 1}

    If $w\in W_{\mathfrak{g}}$, $\alpha \in \Pi$, and $w(\alpha) \in \Pi$, then
    $T_w(E_\alpha) = E_{w(\alpha)}$.

\end{proposition}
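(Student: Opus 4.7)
The plan is to combine a weight-space dimension argument with an induction on $\ell(w)$ to pin down a scalar.

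First I would verify that $T_w(E_\alpha)$ is a weight vector of the expected weight. Since $T_w(K_\mu) = K_{w(\mu)}$ and $K_\mu E_\alpha K_{-\mu} = q^{\langle \mu, \alpha\rangle} E_\alpha$, applying $T_w$ to the relation $K_{w^{-1}(\mu)} E_\alpha K_{-w^{-1}(\mu)} = q^{\langle w^{-1}(\mu), \alpha\rangle} E_\alpha$ immediately gives
\[
    K_{\mu}\, T_w(E_\alpha)\, K_{-\mu} \;=\; q^{\langle \mu, w(\alpha)\rangle}\, T_w(E_\alpha),
\]
so $T_w(E_\alpha)$ lies in the $w(\alpha)$-weight space. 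Next I would argue $T_w(E_\alpha) \in {\mathcal U}_q^+(\mathfrak{g})$ by propagating, along a reduced expression of $w$, the standard fact that $T_i$ sends $({\mathcal U}_q^+)_\beta$ into ${\mathcal U}_q^+$ whenever $\beta$ and $s_i(\beta)$ are both in $\Delta_+$. Because $w(\alpha)$ is simple, the weight space $({\mathcal U}_q^+)_{w(\alpha)}$ is one-dimensional and spanned by the Chevalley generator $E_{w(\alpha)}$, so
\[
    T_w(E_\alpha) \;=\; c\, E_{w(\alpha)}
\]
for some nonzero scalar $c \in \mathbb{K}$.

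The remaining task is to show $c = 1$. I would do this by induction on $\ell(w)$. The case $\ell(w) = 0$ is trivial. For $\ell(w) \geq 1$, set $\alpha_j := w(\alpha)$ and pick $k$ with $\ell(s_k w) = \ell(w) - 1$. Since $w^{-1}(\alpha_j) = \alpha \in \Delta_+$ forces $\ell(s_j w) = \ell(w) + 1$, one must have $k \neq j$. Setting $w' := s_k w$ gives $w = s_k w'$ as a reduced factorization, and the intermediate image is $w'(\alpha) = s_k(\alpha_j) = \alpha_j - \langle \alpha_j, \alpha_k^\vee\rangle \alpha_k$. In the orthogonal sub-case $\langle \alpha_j, \alpha_k^\vee\rangle = 0$, the intermediate root is again simple, the induction hypothesis gives $T_{w'}(E_\alpha) = E_j$, and the orthogonality case $T_k(E_j) = E_j$ of Lusztig's explicit rank-two formulas closes the step.

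The main obstacle is the non-orthogonal sub-case $\langle \alpha_j, \alpha_k^\vee\rangle \neq 0$, where $w'(\alpha)$ is no longer simple and the induction hypothesis does not directly apply to $w'$. To overcome it, I would argue that only the scalar $c$ matters and extract it using the PBW construction: any reduced expression of $w$ extends to a reduced expression of the longest element by appending $s_\alpha$ (possible since $\ell(w s_\alpha) = \ell(w) + 1$ because $w(\alpha) \in \Delta_+$), so $T_w(E_\alpha)$ equals the PBW root vector associated to the simple root $w(\alpha)$ in that reduced expression. Normalizing at the outset so that the PBW root vector attached to any simple root equals the corresponding Chevalley generator, and verifying independence of this normalization under the rank-two braid moves (which is done by direct rank-two computation), then forces $c = 1$ and completes the proof.
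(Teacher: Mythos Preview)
The paper does not prove this proposition; it is quoted from \cite[Proposition 8.20]{Jantzen}, so there is no in-paper argument to compare against. Your weight-space and one-dimensionality steps are correct, and reducing the question to the determination of a scalar $c$ is the right shape.

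The induction on $\ell(w)$ is a genuine dead end: already in type $A_2$ with $w = s_1 s_2$ and $\alpha = \alpha_1$ one has $w(\alpha_1) = \alpha_2$, the unique left descent of $w$ is $s_1$, which is not orthogonal to $\alpha_2$, and you are forced into the non-orthogonal branch with no smaller instance of the proposition available. Your fallback aims at the correct endpoint---a rank-two reduction---but is not a proof as written. The phrase ``normalizing so that the PBW root vector attached to any simple root equals the Chevalley generator'' is the assertion under discussion; it cannot be imposed. And ``independence under rank-two braid moves'' is not delivered by the rank-two case of the proposition: PBW root vectors at non-simple roots genuinely change under braid moves (for instance $T_1(E_2) \neq T_2(E_1)$ in type $A_2$), so tracking the root vector attached to $\alpha_j$ through an arbitrary chain of braid moves needs an additional argument you have not supplied. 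The standard route (as in Jantzen) avoids this by first proving a purely Coxeter-theoretic lemma: whenever $w(\alpha_i) = \alpha_j$ with both roots simple, $w$ admits a length-additive factorization $w = w_m \cdots w_1$ with each $w_t$ lying in a rank-two standard parabolic $\langle s_{p_{t-1}}, s_{p_t} \rangle$ and satisfying $w_t(\alpha_{p_{t-1}}) = \alpha_{p_t}$. The proposition then follows by iterating the rank-two case, which is a finite explicit check. Your write-up should replace both the induction and the PBW sketch with this factorization plus the rank-two computation.
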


When $\mathfrak{g}$ is the Lie algebra $\mathfrak{sl}(n)$ and $T_i =
T_{i,1}^{\prime\prime}$ ($i \in [n - 1]$), Lusztig's symmetries can be
succinctly written as
\begin{align}
    &T_i(K_\mu) = K_{s_i(\mu)},
    \\
    & T_i(E_j) =
    \label{Lusztig symmetries}
    \begin{cases}
        E_j, & (| i - j | > 1), \\
        E_iE_j - q^{-1}E_jE_i, & (|i - j| = 1), \\
        -F_iK_{\alpha_i} & (| i - j | = 0),
    \end{cases} \\
    & T_i(F_j) =
    \begin{cases}
        F_j, & (| i - j | > 1), \\
        -q(F_iF_j - q^{-1}F_jF_i), & (| i - j | = 1), \\
        -K_{\alpha_i}^{-1}E_i & (| i - j | = 0).
    \end{cases}
\end{align}
\noindent for all $i,j\in [n - 1]$ and $\mu\in Q$.

\subsection{Quantum Schubert cells}
\label{q-Schubert cell, section}

Quantum Schubert cell algebras were introduced in \cite{DKP} and \cite{L}. They
are a family of subalgebras of ${\mathcal U}_q(\mathfrak{g})$ indexed by
elements of the Weyl group of $\mathfrak{g}$.  To construct a quantum Schubert
cell, first fix $w\in W_\mathfrak{g}$ and a reduced expression $w =
s_{i_1}\cdots s_{i_t} \in W_\mathfrak{g}$. Next define the positive roots
\begin{equation}
    \beta_1 = \alpha_{i_1}, \beta_2=s_{i_1}\alpha_{i_2}, \dots, \beta_t =
    s_{i_1} \cdots s_{i_{t-1}}\alpha_{i_t}
\end{equation}
and the positive root vectors
\begin{equation}
    X_{\beta_1}=E_{i_1},
    X_{\beta_2}=T_{s_{i_1}}E_{i_2},...,X_{\beta_t}=T_{s_{i_1}}\cdots
    T_{s_{i_{t-1}}}E_{i_t}.
\end{equation}
\noindent There is an analogous construction of negative root vectors
$X_{-\beta_1}, \dots, X_{-\beta_t}$ by replacing the $E_i$'s with $F_i$'s in
the above construction. Following \cite{DKP} and \cite[Section 40.2]{L}, the
quantum Schubert cell algebra ${\mathcal U}_q^{\pm}[w]$ is defined to be the
subalgebra of ${\mathcal U}_q(\mathfrak{g})$ generated by the root vectors
$X_{\pm\beta_1}, \dots, X_{\pm\beta_t}$. De Concini, Kac, and Procesi
\cite[Proposition 2.2]{DKP} and Lusztig \cite{L} proved that ${\mathcal
U}_q^\pm[w]$ does not depend on the reduced expression for $w$.  It was
conjectured by Berenstein and Greenstein in \cite[Conjecture 5.3]{AB} that
${\mathcal U}_q^{\pm}[w]$ could equivalently be defined as
\[
    {\mathcal U}_q^\pm[w] = {\mathcal U}_q^\pm(\mathfrak{g}) \cap T_w({\mathcal
    U}_q^\mp(\mathfrak{g}))
\]
\noindent for any symmetrizable Kac-Moody Lie algebra $\mathfrak{g}$. They
proved their conjecture in the case when $\mathfrak{g}$ is of finite type. The
conjecture was later proven independently by Kimura \cite[Theorem 1.1
(1)]{Kimura} and Tanisaki \cite[Proposition 2.10]{Tanisaki}.

The algebra ${\mathcal U}_q^\pm[w]$ has a PBW-type basis of ordered monomials
\begin{equation}
    \label{PBW basis}
    X_{\pm\beta_1}^{n_1}\cdots X_{\pm\beta_t}^{n_t},\hspace{.4cm}
    n_1,...,n_t\in\mathbb{Z}_{\geq 0}.
\end{equation}
\noindent The Levendorskii-Soibelmann Straightening Rule gives commutation
relations in ${\mathcal U}_q^+[w]$.

\begin{theorem}\cite[Prop. 5.5.2]{LS}

    For $i<j$,
    \begin{equation}
        X_{\beta_i}X_{\beta_j} = q^{\langle\beta_i, \beta_j\rangle} X_{\beta_j}
        X_{\beta_i} + \sum_{n_{i+1}, ..., n_{j - 1} \geq 0} z_{ij}(n_{i + 1},
        ..., n_{j - 1}) X_{\beta_{i+1}}^{n_{i+1}} \cdots X_{\beta_{j -
        1}}^{n_{j - 1}},
    \end{equation}
    \noindent where $z_{ij}(n_{i+1},...,n_{j-1})\in\mathbb{K}$, and
    $z_{ij}(n_{i+1},\dots, n_{j-1}) = 0$ whenever $\sum_{i<k<j}n_k\beta_k \neq
    \beta_i + \beta_j$.

\end{theorem}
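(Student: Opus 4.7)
The plan is to show that the $q$-commutator
\[
    C := X_{\beta_i}X_{\beta_j} - q^{\langle\beta_i,\beta_j\rangle}X_{\beta_j}X_{\beta_i}
\]
lies in the subalgebra of $\mathcal{U}_q^+[w]$ generated by the intermediate root vectors $X_{\beta_{i+1}},\ldots,X_{\beta_{j-1}}$; once this is established, the vanishing of $z_{ij}(n_{i+1},\ldots,n_{j-1})$ outside weight $\beta_i+\beta_j$ follows from the $Q$-grading of $\mathcal{U}_q^+(\mathfrak{g})$ together with the PBW basis \ref{PBW basis}, since each $X_{\beta_k}$ is homogeneous of weight $\beta_k$.

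I would first establish a \emph{right closure} using the iterated Ore extension structure $\mathcal{U}_q^+[w] = \mathbb{K}[X_{\beta_1}][X_{\beta_2};\sigma_2,\delta_2]\cdots[X_{\beta_t};\sigma_t,\delta_t]$ with $\sigma_k(X_{\beta_\ell}) = q^{-\langle\beta_\ell,\beta_k\rangle}X_{\beta_\ell}$ for $\ell<k$. The Ore identity at the $j$-th step,
\[
    X_{\beta_j}X_{\beta_i} = q^{-\langle\beta_i,\beta_j\rangle}X_{\beta_i}X_{\beta_j}+\delta_j(X_{\beta_i}),
\]
yields $C = -q^{\langle\beta_i,\beta_j\rangle}\delta_j(X_{\beta_i})$, which lives in the preceding Ore layer, i.e.\ the subalgebra generated by $X_{\beta_1},\ldots,X_{\beta_{j-1}}$. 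For a parallel \emph{left closure} I would invoke Lusztig's braid action: the tail $s_{i_i}\cdots s_{i_t}$ is reduced, and the identity $T_{s_{i_1}}\cdots T_{s_{i_{i-1}}}\bigl(T_{s_{i_i}}\cdots T_{s_{i_{s-1}}}(E_{i_s})\bigr) = X_{\beta_s}$ for $s\ge i$ shows that this Lusztig automorphism identifies $\mathcal{U}_q^+[s_{i_i}\cdots s_{i_t}]$ with the subalgebra of $\mathcal{U}_q^+[w]$ generated by $X_{\beta_i},\ldots,X_{\beta_t}$, which thus also contains $C$. Intersecting the two closures places $C$ inside the subalgebra generated by $X_{\beta_i},\ldots,X_{\beta_{j-1}}$, and the $Q$-grading restricts its PBW expansion to ordered monomials of total weight $\beta_i+\beta_j$.

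The main obstacle is the final elimination of the boundary variable $X_{\beta_i}$. To handle it I would conjugate the identity by $(T_{s_{i_1}}\cdots T_{s_{i_{i-1}}})^{-1}$, reducing to the case $i=1$ inside the smaller quantum Schubert cell $\mathcal{U}_q^+[s_{i_i}\cdots s_{i_j}]$, where $X_{\beta_i}$ becomes the Chevalley generator $E_{i_i}$ of simple-root weight $\alpha_{i_i}$. Reapplying the right-closure Ore argument inside this smaller cell localizes the shifted commutator $C'$ in the subalgebra generated by $E_{i_i}$ together with the intermediate root vectors $X_{\beta_2^{(i)}},\ldots,X_{\beta_{j-i}^{(i)}}$, and the weight equation $n_1\alpha_{i_i}+\sum_{s=2}^{j-i}n_s\beta_s^{(i)} = \alpha_{i_i}+\beta_{j-i+1}^{(i)}$ governing its PBW expansion --- combined with the fact that simple roots are indecomposable in the positive root lattice, sharpened in $\mathfrak{sl}(n)$ by the $\{0,1\}$-structure of simple-root coefficients on positive roots --- forces every PBW monomial in $C'$ to satisfy $n_1 = 0$, eliminating $E_{i_i}$ entirely. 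Pulling back through the Lusztig conjugation recovers the LS identity, with the leading coefficient $q^{\langle\beta_i,\beta_j\rangle}$ pinned down by the Ore formula itself.
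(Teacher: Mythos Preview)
The paper does not prove this theorem; it is quoted from \cite{LS} and used as a black box. So there is no ``paper's proof'' to match, and your proposal must stand on its own.

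Two substantive gaps remain. First, your right-closure step invokes the iterated Ore presentation $\mathcal{U}_q^+[w]=\mathbb{K}[X_{\beta_1}][X_{\beta_2};\sigma_2,\delta_2]\cdots$ with $\sigma_k(X_{\beta_\ell})=q^{-\langle\beta_\ell,\beta_k\rangle}X_{\beta_\ell}$. But the existence of such $\delta_k$ landing in the previous layer is precisely the assertion that $X_{\beta_k}X_{\beta_\ell}-q^{-\langle\beta_\ell,\beta_k\rangle}X_{\beta_\ell}X_{\beta_k}$ lies in $\langle X_{\beta_1},\dots,X_{\beta_{k-1}}\rangle$, which is the content of the straightening rule you are trying to prove. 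The Ore structure is a \emph{corollary} of Levendorskii--Soibelman, not an independent input; using it here is circular. You would need to establish right closure directly, for instance via Lusztig's tensor decomposition $\mathcal{U}_q^+\cong \mathcal{U}_q^+[w]\otimes T_w(\mathcal{U}_q^+[w^{-1}w_0])$.

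Second, the weight argument you give to eliminate $E_\alpha$ does not go through as written. After conjugation the equation is $n_1\alpha+\sum_{s\ge 2}n_s\beta_s^{(i)}=\alpha+\beta_{j-i+1}^{(i)}$; neither indecomposability of $\alpha$ nor the $\{0,1\}$ coefficient property in type $A$ by itself forces $n_1=0$ (for example, nothing you state rules out $n_1=2$ together with intermediate $\beta$'s summing to $\beta_{j-i+1}^{(i)}-\alpha$). What actually excludes such configurations is the \emph{convexity} of the ordering $\beta_1<\cdots<\beta_t$: whenever $\beta_a+\beta_b\in\Delta_w$ with $a<b$, the sum occurs strictly between them, and iterating this blocks any decomposition of $\beta_{j-i+1}^{(i)}$ using only earlier roots. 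The standard route avoids this combinatorics entirely by proving the lemma that $E_\alpha x-q^{\langle\alpha,\mu\rangle}xE_\alpha$ stays in $T_\alpha(\mathcal{U}_q^+)\cap\mathcal{U}_q^+$ for homogeneous $x$ there; this holds for all semisimple $\mathfrak{g}$, whereas your argument as stated is already restricted to $\mathfrak{sl}(n)$ even though the theorem is asserted in general.
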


\noindent An analogous straightening rule applies to ${\mathcal U}_q^-[w]$. The
straightening law in conjunction with the PBW basis result \ref{PBW basis} can
be used to give finite presentations of quantum Schubert cell algebras.

\subsection{The quantum Schubert cell algebras $\mathbf{{\mathcal
U}_q(\mathfrak{n}_J)}$}

To construct the quantum Schubert cell algebras of interest in the remainder of
this paper, we turn our attention to parabolic elements $w_J$ in the Weyl group
$W$ of $\mathfrak{sl}(n)$. The algebras ${\mathcal U}_q^+[w_J]$ are
quantizations of the nilradicals $\mathfrak{n}_J$ of standard parabolic
subalgebras $\mathfrak{p}_J$ of $\mathfrak{sl}(n)$. For this reason we denote
them instead by ${\mathcal U}_q(\mathfrak{n}_J)$. First let
\[
    J = \{i_1 < i_2 \dots < i_t\} \subseteq [n - 1],
\]
\noindent and let $W^J \subseteq W$ be the subgroup of $W$ generated by the
simple reflections $\{s_{i} | i \notin J \}$.  Let $w_0$ and $w_0^J$ denote the
longest elements in $W$ and $W^J$ respectively. Define
\[
    w_J := w_0^Jw_0 \in W.
\]
\noindent We can write $w_J = S_tS_{t-1}\dots S_1$, where
\begin{equation}
    \label{S_k's} S_k := (s_{i_k}\dots s_{n-1})(s_{i_k - 1} \dots s_{n-2})
    \dots (s_{i_{k-1} + 1} \dots s_{n - (i_k - i_{k-1})}) \in W
\end{equation}
\noindent for all $k \in [t]$ (where, by convention, $i_0 = 0$ and $i_{t+1} =
n$). The expression written in \ref{S_k's} is a reduced expression, and one can
obtain a reduced expression for $w_J$ by concatenating these reduced
expressions for $S_t$, $S_{t-1}$, \dots, $S_1$. We tacitly use this particular
reduced expression for $w_J$ in our construction of ${\mathcal
U}_q(\mathfrak{n}_J)$.  The set of positive Lusztig roots is
\[
    \Delta_{w_J} = \{ e_i - e_j \in Q \mid \exists k\in J \text{ such that } i
    \leq k < j \}.
\]
\noindent We find it convenient to define the set of tuples
\[
    \Phi_J := \{ (i, j) \in [n] \times [n] \mid e_i - e_j \in
    \Delta_{w_J}\}.
\]
\noindent For brevity we let $X_{ij}$ ($(i, j) \in \Phi_J$) denote the
positive Lusztig root vector of degree $e_i-e_j$.

\begin{example}

    Let $n = 7$ and suppose $J = \{ 2, 5, 6\}$. Here,
    \[
        w_J = s_6(s_5s_6)(s_4s_5)(s_3s_4)(s_2s_3s_4s_5s_6)(s_1s_2s_3s_4s_5) \in
        W
    \]
    \noindent is a reduced expression. The positive Lusztig root vectors in the
    corresponding quantized nilradical ${\mathcal U}_q(\mathfrak{n}_J)$ are
    $X_{67}$, $X_{57}$, $X_{56}$, $X_{47}$, $X_{46}$, $X_{37}$, $X_{36}$,
    $X_{27}$, $X_{26}$, $X_{23}$, $X_{24}$, $X_{25}$, $X_{17}$, $X_{16}$,
    $X_{13}$, $X_{14}$, $X_{15}$.

\end{example}

\noindent If the root vector $X_{ij}$ is viewed as occupying the $(i,j)$-entry
in an $n\times n$ array, the entire set $\{X_{ij}\}$ of root vectors of
${\mathcal U}_q(\mathfrak{n}_J)$ fills all entries in a block upper triangular
shape that depends on $J$. More precisely, the set of positive Lusztig roots
$\Delta_{w_J}$ can be characterized as the smallest set satisfying the
conditions (1) $e_j - e_{j + 1} \in \Delta_{w_J}$ if and only if $j \in J$, (2)
if $e_i - e_j \in \Delta_{w_J}$ with $i > 1$, then $e_{i - 1} - e_j \in
\Delta_{w_J}$, and (3) if $e_i - e_j \in \Delta_{w_J}$ with $j < n$, then $e_i
- e_{j + 1} \in \Delta_{w_J}$.

The Levendorskii-Soibelmann straightening rule together with the PBW basis
result \ref{PBW basis} implies that a finite presentation of a quantum Schubert
cell algebra can be obtained from the commutation relations among the pairs of
root vectors. In order to describe the defining relations of ${\mathcal
U}_q(\mathfrak{n}_J)$, we first define the function $r: [n] \to J \cup \{0\}$
as \[ r(m):= \max\{ k \in J \cup \{0\} \mid k < m \}.  \] The specific form for
the commutation relation between a pair of root vectors, say $X_{ij}$ and
$X_{\ell m}$, depends on the relative ordering on $i$, $j$, $\ell$, $m$,
$r(j)$, and $r(m)$, and in some cases the relative ordering on $w_J^{-1}(j)$,
$w_J^{-1}(m)$, $w_0^J(j)$, and $w_0^J(m)$ plays a role, where we have
identified the Weyl group of $\mathfrak{sl}(n)$ with the symmetric group on
$[n]$. Under this identification the simple reflection $s_i$ corresponds to the
transposition $(i, i + 1)$, and $w_J$ corresponds to the unique permutation
satisfying the condition that for all $1 \leq i < j \leq n$, $w_J(i) < w_J(j)$
if and only if $r(i) = r(j)$.

\begin{theorem}
    \label{defining relns in q-Schubert cell}

    The quantized nilradical ${\mathcal U}_q(\mathfrak{n}_J)$ is generated by
    the root vectors $X_{ij}$ $((i, j) \in \Phi_J)$ and has the following
    defining relations:
    \begin{align}
        X_{ij}X_{\ell m} &=
            \begin{cases}
                qX_{\ell m}X_{ij}
                &
                $\begin{tabularx}{\textwidth}{l}
                    $(\ell < i \text{ and } j = m)$
                    \\
                    or
                    $(\ell = i \text{ and } w_J^{-1}(j) <
                    w_J^{-1}(m))$
                \end{tabularx}$
                \\
                \\
                X_{\ell m}X_{ij}
                &
                $\begin{tabularx}{\textwidth}{l}
                    $(\ell < i < w_0^J(j) < w_0^J(m))$
                    \\
                    or
                    $(\ell < m < i < j)$
                    \\
                    or
                    $(\ell \leq r(m) < i < m < j)$
                \end{tabularx}$
                \\
                \\
                X_{\ell m}X_{ij} + \widehat{q}X_{\ell j}X_{im}
                &
                $\begin{tabularx}{\textwidth}{l}
                    $(\ell < i \leq r(m) < j < m)$
                    \\
                    or
                    $(\ell < i \leq r(m) < m \leq r(j) < j)$
                \end{tabularx}$
                \\
                \\
                q^{-1}X_{\ell m}X_{ij} + X_{(\ell j), m}
                &
                $\begin{tabularx}{\textwidth}{l}
                    $(\ell < m = i < j)$
                \end{tabularx}$
            \end{cases}
        \end{align}
        \noindent where $X_{(\ell j), m} := (-q)^{m - r(m) - 1} X_{\ell j} +
        \widehat{q}\sum_{r(m) < k < m}(-q)^{m - k - 1} X_{kj} X_{\ell k}$.

\end{theorem}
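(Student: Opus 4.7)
The plan is to derive the relations directly from the Levendorskii-Soibelmann straightening rule applied to the specific reduced expression for $w_J = S_tS_{t-1}\cdots S_1$ fixed in (\ref{S_k's}). First I would work out the PBW order on the generators explicitly: by tracking the roots $\beta_k = s_{i_1}\cdots s_{i_{k-1}}(\alpha_{i_k})$ block by block through the factors $S_k$, I would identify a formula (of which the worked example is a special case) that tells me, for any two generators $X_{ij}$ and $X_{\ell m}$, which comes first in the PBW order. With this in hand, each pair of distinct generators falls into exactly one of the cases listed in the theorem, and the dichotomy in the first case (same column versus same row with $w_J^{-1}$ comparison) and the analogous distinctions in the remaining cases become mechanical consequences of the ordering.

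Next, the leading coefficient $q^{\langle \beta_a, \beta_b \rangle}$ predicted by the straightening rule can be read off directly from the inner product $\langle e_i - e_j, e_\ell - e_m\rangle = \delta_{i\ell} - \delta_{im} - \delta_{j\ell} + \delta_{jm}$, which produces the three values $q$, $1$, and $q^{-1}$ appearing across the four cases. The straightening rule then says that the correction is a $\mathbb{K}$-linear combination of PBW-ordered monomials in the strictly intermediate root vectors of total weight $\beta_a + \beta_b$. In Cases (2) and (3), the weight constraint together with the range of available intermediate variables forces the support to consist of at most the single quadratic $X_{\ell j}X_{im}$, and its coefficient (either $0$ or $\widehat{q}$) can be pinned down by restricting the calculation to the obvious $\mathfrak{sl}(3)$- or $\mathfrak{sl}(4)$-subsystem and applying the Lusztig symmetries in (\ref{Lusztig symmetries}).

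The hardest step is Case (4), where $X_{(\ell j), m}$ is a sum of $m - r(m)$ monomials with explicit signs $(-q)^{m - k - 1}$. The plan is to induct on $m - r(m)$. The base case $m = r(m) + 1$ reduces to a single-column relation that unravels to $E_pE_{p+1} - q^{-1}E_{p+1}E_p = T_p(E_{p+1})$, directly from (\ref{Lusztig symmetries}). For the inductive step, I would apply an appropriate Lusztig symmetry $T_i$ to an already established Case (4) relation sitting inside a subalgebra ${\mathcal U}_q(\mathfrak{n}_{J'})$ for a strictly smaller $J'$, using Proposition \ref{braid action 1} to track how each intermediate $X_{kj}$ transforms. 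The signs $(-q)^{m - k - 1}$ arise from the $-q$ appearing in the middle branch of (\ref{Lusztig symmetries}), accumulating one factor per iteration.

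Finally, to conclude that these relations form a complete presentation, I would observe that they suffice to put any word in the $X_{ij}$'s into PBW-ordered form, and hence that the abstractly presented algebra surjects onto ${\mathcal U}_q(\mathfrak{n}_J)$. Counting graded components via the PBW basis (\ref{PBW basis}) then shows this surjection is an isomorphism.
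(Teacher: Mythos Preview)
Your plan is broadly workable and close in spirit to the paper's argument, but the execution differs in one important way and there is a genuine imprecision in your treatment of Case~(4).

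The paper does not work directly with the Levendorskii--Soibelmann correction terms. Instead it first proves an explicit closed formula for every root vector as a nested $q^{-1}$-commutator in the Chevalley generators (Lemma~\ref{lemma 1}), and then builds a small toolkit (Lemmas~\ref{T_E} and~\ref{T_X}) describing exactly how each Lusztig symmetry $T_k$ moves these nested commutators around. With this in hand, each of the seven sub-cases is handled by writing down an explicit braid-group automorphism $\psi$ (a composition of $T_k$'s) that carries the pair $(X_{ij}, X_{\ell m})$ to a pair of very short commutators --- essentially an $\mathfrak{sl}(3)$ computation already recorded in Lemma~\ref{EE} --- and then pulling the relation back through $\psi^{-1}$. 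Your approach via weight-support analysis of the straightening rule would also work, but you will still need something equivalent to Lemma~\ref{lemma 1}: the ``restriction to an $\mathfrak{sl}(3)$- or $\mathfrak{sl}(4)$-subsystem'' only makes sense once you know that the root vector $X_{ij}$, defined via the long reduced word for $w_J$, coincides with the natural root vector in the small subsystem. That identification is not automatic and is precisely the content of Lemma~\ref{lemma 1}.

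Your inductive scheme for Case~(4) is not quite right as stated. The induction is on $m - r(m)$ \emph{within the same $J$}, not on a smaller $J'$: since $r(m-1) = r(m)$ whenever $m - r(m) > 1$, the inductive hypothesis is the Case~(4) relation for the pair $(X_{i-1,j}, X_{\ell, m-1})$ in ${\mathcal U}_q(\mathfrak{n}_J)$ itself. The paper's inductive step applies a fixed automorphism $\psi_7$ and rewrites $[X_{ij}, X_{\ell m}]$ as (up to $\psi_7$) the expression $X_{\ell, m-1}X_{i-1,j} - q^{-1}X_{i-1,j}X_{\ell, m-1}$, into which the inductive hypothesis is then substituted; the factor $(-q)$ accumulates through this algebraic recursion, not from the $-q$ in the $T_i(F_j)$ branch of~(\ref{Lusztig symmetries}) (only $E$-type symmetries are used). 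Finally, your completeness argument via the PBW basis~(\ref{PBW basis}) matches the paper's exactly.
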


\begin{proof}

    See Section \ref{proof of defining relns, section}.

\end{proof}

\section{The Algebra ${\mathcal O}_q(P_J)^{\operatorname{co}\theta}$ of
coinvariants}

\subsection{Crossed product algebras, $H$-cleft extensions, and
coinvariants}

Let $H$ be a bialgebra and let $A$ be a left $H$-comodule algebra with coaction
$\xi: A \to H \otimes A$. An element $a\in A$ is a \textit{left coinvariant} if
$\xi(a) = 1 \otimes a$. The set $A^{\operatorname{co}\xi}$ of left coinvariants
is, in fact, a subalgebra of $A$. The $H$-extension $A^{\operatorname{co} \xi}
\subseteq A$ is called \textit{$H$-cleft} if there exists a convolution
invertible morphism of left $H$-comodules $\gamma: H \to A$ (with convolution
inverse $\overline{\gamma}$) such that $\gamma(1) = 1$. In this setting, the
vector space $H\otimes A^{\operatorname{co}\xi}$ can be equipped with an
associative multiplication giving it the structure of a left crossed product
algebra $H \#_\sigma^\triangleleft A^{\operatorname{co}\xi}$ \cite{Doi
Takeuchi, Montgomery}. The multiplication in $H \#_\sigma^\triangleleft
A^{\operatorname{co}\xi}$ is constructed by using the cleavage map $\gamma$ to
first define a right $H$-action on $A^{\operatorname{co}\xi}$ given by
\[
    a \triangleleft h := \sum_{(h)} \overline{\gamma}(h_1)a\gamma(h_2)
\]
\noindent and a linear map $\sigma: H \otimes H \to A^{\operatorname{co}\xi}$
defined as
\[
    \sigma(h, h^\prime) = \sum_{(h), (h^\prime)} \overline{\gamma} (h_1
    h_1^\prime) \gamma(h_2) \gamma(h_2^\prime),
\]
\noindent for all $a, a^\prime \in A^{\operatorname{co}\xi}$, $h, h^\prime \in
H$.  The multiplication in $H \#_\sigma^\triangleleft A^{\operatorname{co}\xi}$
is defined by
\begin{equation}
    \label{multiplication in a left crossed product}
    (h \otimes a)(h^\prime \otimes a^\prime) = \sum_{(h),
    (h^\prime)}h_1h_1^\prime \otimes \sigma(h_2,
    h_2^\prime)(a \triangleleft h_3^\prime)a^\prime,
\end{equation}
\noindent for all $a, a^\prime \in A^{\operatorname{co}\xi}$, $h, h^\prime \in
H$. In the case when $\gamma: H \to A$ is an algebra morphism, the
multiplication in $H \#_\sigma^\triangleleft A^{\operatorname{co}\xi}$
simplifies to
\begin{equation}
    \label{multiplication in smash product}
    (h \otimes a)(h^\prime \otimes a^\prime) = \sum_{(h)} hh_1^\prime \otimes
    (a \triangleleft h_2^\prime)a^\prime
\end{equation}
\noindent for all $h,h^\prime \in H$ and $a, a^\prime \in
A^{\operatorname{co}\theta}$, which is precisely the multiplication in the left
smash product algebra, commonly denoted $H \# A^{\operatorname{co}\xi}$.  By a
result of \cite{Doi Takeuchi}, shown in \cite[Proposition 7.2.3]{Montgomery},
there is an algebra isomorphism
\[
    \Psi: H\#_\sigma^\triangleleft A^{\operatorname{co}\xi}
    \stackrel{\cong}{\longrightarrow} A
\]
\noindent given by $h \otimes a \mapsto \gamma(h)a$ for all $h \in H$ and $a
\in A^{\operatorname{co}\xi}$.

\subsection{The algebra ${\mathcal O}_q(P_J)^{\operatorname{co}\theta}$}
\label{algebra of coinvariants, section}

Recall we fix a subset
\[
    J \subseteq [n - 1].
\]
\noindent Let $P_J$ be the standard parabolic subgroup of $SL_n$ obtained by
deleting the negative simple roots $-\alpha_i$ for $i \in J$. Thus, $P_J$ is a
group of block upper triangular matrices,
\[
    P_J = \{ (a_{ij})\in SL_n \mid a_{ij} = 0 \text{ if } (j, i) \in
    \Phi_J\}.
\]
\noindent We will denote the Levi decomposition of $P_J$ by
\[
    P_J = L_JU_J,
\]
\noindent where $L_J$ is the standard Levi factor of $P_J$ consisting of
matrices with block entries off the main diagonal equal to $0$, and $U_J$ is
the unipotent subgroup of matrices in $P_J$ having block diagonal entries equal
to identity matrices. Observe that matrix multiplication
\[
    L_J \times P_J \to P_J,
\]
\noindent induces a coaction ${\mathcal O}(P_J) \to {\mathcal O}(L_J) \otimes
{\mathcal O}(P_J)$ among coordinate rings. Following \cite{Jaramillo 1,
Jaramillo 2}, we turn our attention to the quantum analogue of this coaction
\[
    \theta: {\mathcal O}_q(P_J) \to {\mathcal O}_q(L_J) \otimes {\mathcal
    O}_q(P_J),
\]
\noindent where ${\mathcal O}_q(L_J)$ and ${\mathcal O}_q(P_J)$ are the
quantized coordinate rings of $L_J$ and $P_J$ respectively.  We recall
${\mathcal O}_q(L_J)$ and ${\mathcal O}_q(P_J)$ are obtained from ${\mathcal
O}_q(SL_n)$ by quotienting by certain two-sided ideals. First, the quantized
coordinate ring ${\mathcal O}_q(SL_n)$ is the $\mathbb{K}$-algebra generated by
$x_{ij}$ ($i,j \in [n])$ and has defining relations
    \begin{align*}
        x_{ij}x_{\ell m}
        &=
        \begin{cases}
            qx_{\ell m}x_{ij},
            &
            (i < \ell \text{ and } j = m) \text{ or } (i = \ell \text{ and } j
            < m),
            \\
            x_{\ell m}x_{ij},
            &
            (i < \ell \text{ and } m < j),
            \\
            x_{\ell m}x_{ij} + \widehat{q}x_{\ell j}x_{im},
            &
            (i < \ell \text{ and } j < m),
        \end{cases}
    \end{align*}
\noindent together with the relation that sets the quantum determinant equal to $1$,
\begin{equation}
    \label{q-determinant is 1}
    \operatorname{det}_q := \sum_{\sigma \in \operatorname{Sym}(n)}
    (-q)^{\ell(\sigma)}x_{1, \sigma(1)}\cdots x_{n, \sigma(n)} = 1,
\end{equation}
\noindent where $\ell(\sigma)$ is the number of inversions in $\sigma$ (see
e.g. \cite{KS}).  Furthermore ${\mathcal O}_q(SL_n)$ is a Hopf algebra
\cite[Section 9.2.3, Proposition 10]{KS} with comultiplication $\Delta$, counit
$\epsilon$, and antipode $S$, given by
\begin{align*}
    \Delta(x_{ij}) &:= \sum_k x_{ik} \otimes x_{kj}, \\
    \epsilon(x_{ij}) &:= \delta_{ij}, \\
    S(x_{ij}) &:= (-q)^{i - j}[\{1,\dots, \widehat{j},\dots, n\} | \{1,\dots
    \widehat{i},\dots, n\}],
\end{align*}
\noindent where, for a pair of subsets $A = \{a_1 < \dots < a_m\}$ and $B =
\{b_1 < \dots < b_m\}$ of $[n]$ of the same cardinality, the \textit{quantum
minor} $[A | B]$ with \textit{row set} $A$ and \textit{column set} $B$ is
defined as
\[
    [A | B] := \sum_{\sigma\in \operatorname{Sym}(m)}
    (-q)^{\ell(\sigma)}x_{a_1, b_{\sigma(1)}}\cdots x_{a_m,b_{\sigma (m)}} \in
    {\mathcal O}_q(SL_n).
\]
\noindent The quantized coordinate rings ${\mathcal O}_q(P_J)$ and ${\mathcal
O}_q(L_J)$ are obtained from ${\mathcal O}_q(SL_n)$ by quotienting by the
two-sided ideals generated by $\{x_{ij} \mid (j, i) \in \Phi_J\}$ and
$\{x_{ij} \mid (i, j) \in \Phi_J \text{ or } (j, i) \in \Phi_J\}$
respectively,
\begin{align*}
    {\mathcal O}_q(P_J) &:= {\mathcal O}_q(SL_n) / \langle x_{ij} \mid (j, i)
    \in \Phi_J\rangle,
    \\
    {\mathcal O}_q(L_J) &:= {\mathcal O}_q(SL_n) / \langle x_{ij} \mid (i, j)
    \in \Phi_J \text{ or } (j, i) \in \Phi_J\rangle.
\end{align*}
\noindent With a slight abuse of notation, we use the symbol $x_{ij}$ to refer
to the coset in ${\mathcal O}_q(P_J)$ containing $x_{ij}$, whereas we will
adopt the symbol $y_{ij}$ to refer to the coset in ${\mathcal O}_q(L_J)$
containing $x_{ij}$. Hence,
\[
    {\mathcal G}_J := \{y_{ij} \mid (i, j) \in [n] \times [n], (i, j) \not\in
    \Phi_J, (j, i) \not\in \Phi_J \}
\]
\noindent is a set of generators for ${\mathcal O}_q(L_J)$.  The quantized
coordinate ring ${\mathcal O}_q(L_J)$ inherits a Hopf algebra structure from
${\mathcal O}_q(SL_n)$. We will denote the comultiplication, counit, and
antipode of ${\mathcal O}_q(L_J)$ by
\[
    \Delta_L, \epsilon_L, S_L.
\]
\noindent In \cite[Theorem 3.19]{Jaramillo 2} it was shown that ${\mathcal
O}_q(P_J)^{\operatorname{co} \theta} \subseteq {\mathcal O}_q(P_J)$ is a left
$O_q(L_J)$-cleft extension with cleavage map
\[
    \gamma: {\mathcal O}_q(L_J) \to {\mathcal O}_q(P_J)
\]
\noindent being the algebra homomorphism given by $\gamma(y_{ij}) = x_{ij}$ for
all generators $y_{ij} \in {\mathcal G}_J$.  The convolution inverse of
$\gamma$ is $\overline{\gamma} = \gamma \circ S_L$.  Since $\gamma$ is an
algebra homomorphism, the crossed product algebra ${\mathcal
O}_q(L_J)\#_\sigma^\triangleleft {\mathcal O}_q(P_J)^{\operatorname{co}\theta}$
is, in fact, the smash product algebra ${\mathcal O}_q(L_J) \# {\mathcal
O}_q(P_J)^{\operatorname{co}\theta}$ with multiplication given in
\ref{multiplication in smash product}. By the above discussion, we have the
following isomorphism.

\begin{theorem} \cite[Theorem 3.19]{Jaramillo 2}

    There is an algebra isomorphism
    \[
        {\mathcal O}_q(L_J) \# {\mathcal O}_q(P_J)^{\operatorname{co}\theta}
        \stackrel{\cong}{\longrightarrow} {\mathcal O}_q(P_J)
    \]
    \noindent given by $y_{ij} \otimes u \mapsto x_{ij}u$, for all generators
    $y_{ij}\in {\mathcal G}_J$ and $u\in {\mathcal
    O}_q(P_J)^{\operatorname{co}\theta}$.

\end{theorem}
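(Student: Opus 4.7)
The plan is to deduce the isomorphism from the general cleft-extension machinery recalled at the end of Section 3.1: for an $H$-cleft extension $A^{\operatorname{co}\xi} \subseteq A$ with cleavage map $\gamma$, Doi--Takeuchi \cite[Proposition 7.2.3]{Montgomery} gives an algebra isomorphism $H \#_\sigma^\triangleleft A^{\operatorname{co}\xi} \stackrel{\cong}{\to} A$, $h \otimes a \mapsto \gamma(h)a$. Because the candidate cleavage $\gamma: {\mathcal O}_q(L_J) \to {\mathcal O}_q(P_J)$, $y_{ij} \mapsto x_{ij}$, is an algebra morphism, the crossed product collapses to the smash product ${\mathcal O}_q(L_J) \# {\mathcal O}_q(P_J)^{\operatorname{co}\theta}$ whose multiplication is given by \ref{multiplication in smash product}, and the map $y_{ij} \otimes u \mapsto x_{ij} u$ is precisely $h \otimes a \mapsto \gamma(h) a$ evaluated on generators. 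Hence the work reduces to three verifications.

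First, I would check that $\gamma$ is well defined. The quadratic relations among the $y_{ij} \in {\mathcal G}_J$ are inherited verbatim from the defining relations of ${\mathcal O}_q(SL_n)$, so they pass through the quotient ${\mathcal O}_q(P_J)$ unchanged. The only genuinely new relation to check is that the image of $\operatorname{det}_q - 1$ from ${\mathcal O}_q(L_J)$ vanishes in ${\mathcal O}_q(P_J)$. In ${\mathcal O}_q(L_J)$ every off-block-diagonal $y_{ij}$ is zero, so $\operatorname{det}_q$ collapses to the (normally ordered) product of the block-diagonal quantum minors. One must then show that this same product of block minors equals $1$ in ${\mathcal O}_q(P_J)$: expanding the relation \ref{q-determinant is 1} in ${\mathcal O}_q(P_J)$ and using that $x_{ij} = 0$ for $(j, i) \in \Phi_J$ forces every surviving $\sigma \in \operatorname{Sym}(n)$ to be block-preserving, so $\operatorname{det}_q$ again factors as the product of the block quantum minors. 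I expect this Laplace-type expansion in the $q$-setting to be the main technical obstacle.

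Second, I would verify that $\gamma$ is a morphism of left ${\mathcal O}_q(L_J)$-comodules, i.e.\ $(\operatorname{id} \otimes \gamma) \circ \Delta_L = \theta \circ \gamma$. On a generator $y_{ij}$ both sides unwind to $\sum_k y_{ik} \otimes x_{kj}$, where in the left-hand side only $k$ in the same block as $i$ contribute (since the other $y_{ik}$ vanish) and in the right-hand side the induced coaction $\theta$ on $x_{ij}$ has the same shape. Combining this with $\gamma(1) = 1$ and the algebra-map property of $\gamma$ automatically yields convolution invertibility with $\overline{\gamma} := \gamma \circ S_L$: the computation
\[
    (\gamma * \overline{\gamma})(h) = \sum_{(h)} \gamma(h_1)\gamma(S_L(h_2)) = \gamma\Bigl(\sum_{(h)} h_1 S_L(h_2)\Bigr) = \epsilon_L(h)\cdot 1,
\]
and the analogous one for $\overline{\gamma} * \gamma$, use only that $\gamma$ is unital and multiplicative together with the antipode axiom for $S_L$.

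With these three items in place, the triple $({\mathcal O}_q(L_J), {\mathcal O}_q(P_J), \gamma)$ is a cleft extension and the Doi--Takeuchi isomorphism specialises exactly to $y_{ij} \otimes u \mapsto x_{ij} u$. Surjectivity is then transparent since $\{x_{ij}\}_{y_{ij}\in{\mathcal G}_J}$ together with ${\mathcal O}_q(P_J)^{\operatorname{co}\theta}$ generate ${\mathcal O}_q(P_J)$, and injectivity follows from the abstract cleft-extension theorem rather than from any direct dimension count.
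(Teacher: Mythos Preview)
Your proposal is correct and follows exactly the route the paper takes: invoke the Doi--Takeuchi isomorphism for cleft extensions, observe that because the cleavage $\gamma$ is an algebra map the crossed product collapses to the smash product, and read off the formula $y_{ij}\otimes u\mapsto x_{ij}u$. The only difference is that the paper does not carry out the three verifications you sketch (well-definedness of $\gamma$, comodule compatibility, convolution invertibility via $\gamma\circ S_L$) but simply cites \cite[Theorem 3.19]{Jaramillo 2} for the fact that $\gamma$ is a cleavage; your outline of those checks is sound, and the block-triangular Laplace factorisation of $\operatorname{det}_q$ in ${\mathcal O}_q(P_J)$ is indeed the one nontrivial ingredient.
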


\noindent To give a presentation of ${\mathcal O}_q(P_J)^{\operatorname{co}
\theta}$ in terms of generators and relations, first define the set
\[
    C_i := \{k \in [n] \mid r(k) =r(i)\}.
\]
\noindent for $i \in [n]$. It follows from the quantum determinant relation
\ref{q-determinant is 1} in ${\mathcal O}_q(SL_n)$ that the quantum minor $[C_i
| C_i]$ is invertible in ${\mathcal O}_q(P_J)$ for all $i\in [n]$. For each
$(i, j) \in \Phi_J$, the ratio of quantum minors
\begin{equation}
    \label{ratio of quantum minors}
    u_{ij} := [C_i | C_i]^{-1} [ C_i | C_i\backslash \{i\} \cup \{j\}] \in
    {\mathcal O}_q(P_J)
\end{equation}
\noindent is a left coinvariant \cite[Section 3.3]{Jaramillo 2}. These
particular elements generate ${\mathcal O}_q(P_J)^{\operatorname{co}\theta}$.

\begin{theorem} \cite[Theorems 3.35 and 3.50]{Jaramillo 2}
    \label{defining relns, algebra of coinvariants}

    The algebra ${\mathcal O}_q(P_J)^{\operatorname{co}\theta}$ is generated by
    $u_{ij}$ (for $(i, j) \in \Phi_J$) and has the following defining
    relations:
    \begin{align}
        u_{ij}u_{\ell m} &=
            \begin{cases}
                qu_{\ell m}u_{ij}
                &
                $\begin{tabularx}{\textwidth}{l}
                    \hspace{5mm}
                    $(\ell = i < j < m)$
                    \\
                    \hspace{5mm}
                    or $(j = m \text{ and } w_J^{-1}(\ell) < w_J^{-1}(i))$
                \end{tabularx}$
                \\
                \\
                u_{\ell m}u_{ij}
                &
                $\begin{tabularx}{\textwidth}{l}
                    \hspace{5mm}
                    $(w_0^J(\ell) < w_0^J(i) < j < m)$
                    \\
                    \hspace{5mm}
                    or
                    $(i < j < \ell < m)$
                    \\
                    \hspace{5mm}
                    or
                    $(i \leq r(j) < \ell < j < m)$
                \end{tabularx}$
                \\
                \\
                u_{\ell m}u_{ij} + \widehat{q}u_{\ell j}u_{im}
                &
                $\begin{tabularx}{\textwidth}{l}
                    \hspace{5mm}
                    $(r(i) < \ell < i < j < m)$
                    \\
                    \hspace{5mm}
                    or
                    $(i \leq r(\ell) < \ell \leq r(j) < j < m)$
                \end{tabularx}$
                \\
                \\
                q^{-1}u_{\ell m}u_{ij} - \widehat{q}u_{(im), \ell}
                &
                $\begin{tabularx}{\textwidth}{l}
                    \hspace{5mm}
                    $(i < j = \ell < m)$
                \end{tabularx}$
            \end{cases}
        \end{align}
        \noindent where $u_{(im), \ell} := (-q)^{r(\ell) -
        w_0^J(\ell)} u_{im} + \sum_{r(\ell) < k < w_0^J(\ell)}(-q)^{\ell -
        w_0^J(k)} u_{w_0^J(k),m} u_{i,w_0^J(k)}$.

\end{theorem}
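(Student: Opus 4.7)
The plan is to work directly inside \({\mathcal O}_q(P_J)\) using the definition \(u_{ij} = [C_i|C_i]^{-1}[C_i|(C_i\setminus\{i\})\cup\{j\}]\) and to derive each commutation relation from the known \(q\)-commutation rules for quantum minors of \({\mathcal O}_q(SL_n)\) together with quantum Plücker / Muir / Laplace-type expansions.  The presentation then follows in two stages: (i) verify that the listed relations hold among the \(u_{ij}\), and (ii) argue that no further relations are needed by matching against the already-established iterated Ore extension structure \(\mathbb{K}[t_1][t_2;\tau_2,\delta_2]\cdots[t_M;\tau_M,\delta_M]\) of \({\mathcal O}_q(P_J)^{\operatorname{co}\theta}\) mentioned in the introduction.

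For step (i), I would first record a small toolkit of auxiliary identities: the minor \([C_p|C_p]\) is normal up to an explicit \(q\)-power with any \([C_i|(C_i\setminus\{i\})\cup\{j\}]\) whose row/column sets are either disjoint from or nested in \(C_p\); the ``diagonal'' minors \([C_i|C_i]\) for different \(i\) quasi-commute with each other; and the row-/column-shifted minors \([C_i|(C_i\setminus\{i\})\cup\{j\}]\) satisfy the usual three-term quasi-commutation relations with a \(\widehat{q}\)-correction whenever the column sets cross.  Given these, I would treat the cases in the theorem one at a time by writing the product \(u_{ij}u_{\ell m}\), pushing the inverse minors \([C_i|C_i]^{-1}\) and \([C_\ell|C_\ell]^{-1}\) past the other factors (each move contributing a controlled \(q\)-power governed by \(r\)), and identifying the remaining product of quantum minors via a Plücker-type relation.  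The choice of case in the statement is precisely dictated by the relative position of the row/column sets of the two minors and by whether the columns \(j,m\) lie in the same, adjacent, or interacting blocks — which is exactly the role played by \(r(j),r(m),w_0^J,w_J^{-1}\) in the hypotheses.

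For step (ii), I would rank the index pairs \((i,j)\in\Phi_J\) in an order compatible with the iterated Ore extension mentioned above; the relations above, read as straightening rules, rewrite an arbitrary monomial as a \(\mathbb{K}\)-linear combination of ordered monomials in the \(u_{ij}\).  Since the iterated Ore extension has \(\prod_{(i,j)\in\Phi_J}\mathbb{Z}_{\geq 0}\) as its PBW basis, surjectivity of the straightening plus the known linear independence of ordered monomials in the Ore extension forces the abstract algebra on generators and relations to coincide with \({\mathcal O}_q(P_J)^{\operatorname{co}\theta}\).  Generation of \({\mathcal O}_q(P_J)^{\operatorname{co}\theta}\) by the \(u_{ij}\) can be read off the smash product isomorphism \({\mathcal O}_q(L_J)\#{\mathcal O}_q(P_J)^{\operatorname{co}\theta}\cong{\mathcal O}_q(P_J)\), since the \(x_{ij}\) for \((j,i)\in\Phi_J\) decompose under \(\theta\) into a \({\mathcal O}_q(L_J)\)-part and a coinvariant part expressible in the \(u_{ij}\).

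The main obstacle will be the last case, \(u_{ij}u_{\ell m} = q^{-1}u_{\ell m}u_{ij} - \widehat{q}\,u_{(im),\ell}\), where \(j=\ell\).  The explicit expression
\[
u_{(im),\ell} := (-q)^{r(\ell) - w_0^J(\ell)} u_{im} + \sum_{r(\ell) < k < w_0^J(\ell)}(-q)^{\ell - w_0^J(k)} u_{w_0^J(k),m}\, u_{i,w_0^J(k)}
\]
is precisely the output of a quantum Laplace-type row expansion of a single quantum minor along the column set indexed by the block of \(\ell\); the \(w_0^J\)-twists and the signs \((-q)^{r(\ell)-w_0^J(\ell)}\) and \((-q)^{\ell-w_0^J(k)}\) record how the anti-diagonal action of \(w_0^J\) on the relevant block permutes the indices and how the intermediate minors factor as products of two \(u\)-generators after multiplying by the appropriate \([C_{\bullet}|C_{\bullet}]^{-1}\).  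The delicate part is tracking these signs and \(q\)-powers across the expansion; I would first verify the singleton \(J=\{p\}\) case (where \(w_0^J\) is trivial on \(C_\ell\) and the sum collapses) to fix conventions, and then bootstrap to general \(J\) by induction on the size of the block containing \(\ell\).
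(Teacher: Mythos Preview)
The paper does not prove this theorem at all: it is quoted verbatim from \cite[Theorems 3.35 and 3.50]{Jaramillo 2} and used as an input to the main results (in particular, to verify the isomorphism of Theorem~\ref{intro, main theorem}). There is therefore no ``paper's own proof'' to compare your proposal against.

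That said, your outline is a reasonable sketch of how the result is established in the cited thesis, and the two-stage structure (verify the relations via quantum-minor identities, then invoke a PBW/Ore argument to show they are defining) is the expected one. One caution on your step~(ii): you appeal to the iterated Ore extension structure of ${\mathcal O}_q(P_J)^{\operatorname{co}\theta}$, but in the paper that fact is also imported from \cite[Theorem 3.46]{Jaramillo 2}, which sits between Theorems 3.35 and 3.50 in the thesis. If you intend a self-contained proof, you should check that the Ore-extension result does not itself rely on already knowing the full presentation; otherwise the argument is circular. A cleaner route for (ii) is to show directly that ordered monomials in the $u_{ij}$ span (via your straightening rules) and are linearly independent (e.g.\ by specialising $q\to 1$ and comparing with ${\mathcal O}(U_J)$, or by a leading-term argument in ${\mathcal O}_q(P_J)$), without invoking the Ore structure as a black box.
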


It was also shown in \cite[Theorem 3.46]{Jaramillo 2} that ${\mathcal
O}_q(P_J)^{\operatorname{co}\theta}$ is an iterated skew polynomial ring
\[
    \mathbb{K}[t_1][t_2; \tau_2, \delta_2]\cdots [t_M; \tau_M, \delta_M],
\]
\noindent where $M = \operatorname{dim}(U_J)$ and $t_k$ ($k \in [M]$) is the
$k$-th element in the sequence $\{u_{ij}\}$ of generators ordered via the rule
$u_{ij} \prec u_{\ell m}$ if and only if either (1) $w_0^J(i) < w_0^J(\ell)$,
or (2) $i = \ell$ and $j < m$.

\section{Proof of Theorem \ref{defining relns in q-Schubert cell}: The defining
relations of ${\mathcal U}_q(\mathfrak{n}_J)$}
\label{proof of defining relns, section}

In proving Theorem \ref{defining relns in q-Schubert cell}, which gives the
defining relations in ${\mathcal U}_q(\mathfrak{n}_J)$, we find it convenient
to first introduce the $q^{-1}$-commutator; for $x, y \in {\mathcal
U}_q(\mathfrak{sl}(n))$, define
\[
    [x, y] := xy - q^{-1}yx \in {\mathcal U}_q(\mathfrak{sl}(n)).
\]
\noindent Observe that for all $x, y, z \in {\mathcal U}_q(\mathfrak{sl}(n))$
such that $xz = zx$, we have an associativity property,
\begin{equation}
    \label{q-associativity}
    [[x, y], z] = [x, [y, z]].
\end{equation}
\noindent For $a_1,\dots a_m \in [n - 1]$, we use the abbreviations
\begin{align*}
    &\mathbf{E}_{a_1,\dots, a_m} = [[[ \cdots [E_{a_1}, E_{a_2}], E_{a_3}],
    \cdots ], E_{a_m}] \in {\mathcal U}_q(\mathfrak{sl}(n)),
    \\
    &\mathbf{T}_{a_1,\dots, a_m} = T_{a_1} \circ \cdots \circ T_{a_m} \in
    \operatorname{Aut}({\mathcal U}_q(\mathfrak{sl}(n)).
\end{align*}
\noindent The $q$-Serre relation \ref{q-Serre 1} implies that if a pair of
consecutive indices, say $a_{k}$ and $a_{k + 1}$, differ by more than $1$, then
those indices in the nested $q^{-1}$-commutator $\mathbf{E}_{a_1,\dots,a_m}$
can be interchanged,
\begin{equation}
    \label{q-commutativity}
    \mathbf{E}_{a_1, \dots, a_m} = \mathbf{E}_{a_1,\dots a_{k - 1}, a_{k + 1},
    a_k, a_{k + 2}, \dots, a_m}.
\end{equation}

The following lemma tells us how certain nested $q^{-1}$-commutators behave
under Lusztig's symmetries.

\begin{lemma}

    \label{T_E}

    $ $

    For all $1 \leq k < \ell < n$,

    \begin{enumerate}

        \item \label{T_E ascending, prepend} $T_k(\mathbf{E}_{k + 1, k + 2,
            \dots, \ell}) = \mathbf{E}_{k, k + 1, \dots \ell}$,

        \item \label{T_E ascending, remove} $T_\ell(\mathbf{E}_{k, k + 1,\dots,
            \ell}) = \mathbf{E}_{k,\dots, k + 1,\dots, \ell - 1}$,

        \item \label{T_E descending, prepend} $T_\ell(\mathbf{E}_{\ell - 1,
            \ell - 2, \dots, k}) = \mathbf{E}_{\ell, \ell - 1, \dots, k}$,

        \item \label{T_E descending, remove} $T_k(\mathbf{E}_{\ell, \ell - 1,
            \dots, k}) = \mathbf{E}_{\ell, \ell - 1, \dots, k + 1}$.

    \end{enumerate}

    For all $k, \ell, m \in [n - 1]$ such that $k \leq m$ and $\ell \not\in \{k
    - 1, k , m, m + 1\}$,

    \begin{enumerate}

        \setcounter{enumi}{4}

        \item \label{T_E ascending, invariant} $T_\ell(\mathbf{E}_{k, k +
            1,\dots, m}) = \mathbf{E}_{k, k + 1,\dots, m}$,

        \item \label{T_E descending, invariant} $T_\ell(\mathbf{E}_{m, m -
            1,\dots, k}) = \mathbf{E}_{m, m - 1,\dots, k}$.

    \end{enumerate}

\end{lemma}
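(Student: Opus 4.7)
The plan is to exploit the fact that the nested $q^{-1}$-commutators $\mathbf{E}_{k, k+1, \dots, m}$ and $\mathbf{E}_{m, m-1, \dots, k}$ are iterated Lusztig translates of single Chevalley generators. Since each $T_i$ is an algebra automorphism it satisfies $T_i([x, y]) = [T_i(x), T_i(y)]$, so the behavior of these nested commutators under Lusztig symmetries can be computed by unwinding them into braid words acting on $E_j$'s and then invoking Proposition \ref{braid action 1}, which gives $T_w(E_\alpha) = E_{w(\alpha)}$ whenever $w(\alpha) \in \Pi$.

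For parts (1) and (3), a direct computation suffices. In (1), Equation \ref{Lusztig symmetries} gives $T_k(E_{k+1}) = [E_k, E_{k+1}] = \mathbf{E}_{k, k+1}$ and $T_k(E_j) = E_j$ for $j \geq k + 2$, so pushing $T_k$ through the nested bracket simply replaces the innermost $E_{k+1}$ by $\mathbf{E}_{k, k+1}$, yielding $\mathbf{E}_{k, k+1, \dots, \ell}$. Part (3) is symmetric. Iterating (1) and (3) produces the presentations
\[
    \mathbf{E}_{k, k+1, \dots, m} = T_k T_{k+1} \cdots T_{m - 1}(E_m),
    \qquad
    \mathbf{E}_{m, m-1, \dots, k} = T_m T_{m - 1} \cdots T_{k + 1}(E_k),
\]
which are the main tool for the remaining four parts. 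For (2), I would write $T_\ell(\mathbf{E}_{k, \dots, \ell}) = T_\ell T_k \cdots T_{\ell - 1}(E_\ell)$, commute $T_\ell$ rightward through $T_k, \ldots, T_{\ell - 2}$ (each has $|j - \ell| \geq 2$), and then apply the identity $T_\ell T_{\ell - 1}(E_\ell) = E_{\ell - 1}$, which follows from Proposition \ref{braid action 1} because $s_\ell s_{\ell-1}(\alpha_\ell) = \alpha_{\ell - 1}$ is a simple root. Part (4) is parallel, using $T_k T_{k+1}(E_k) = E_{k+1}$ (from $s_k s_{k+1}(\alpha_k) = \alpha_{k+1}$).

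For (5) and (6), I would split into cases. When $\ell < k - 1$ or $\ell > m + 1$, the symmetry $T_\ell$ commutes with every $T_j$ appearing in the presentation and fixes $E_m$ (resp. $E_k$), so the claim is immediate. The interior case $k + 1 \leq \ell \leq m - 1$ is the chief obstacle: here $T_\ell$ fails to commute with its neighbors $T_{\ell \pm 1}$, and the plan is to apply the braid relation $T_\ell T_{\ell - 1} T_\ell = T_{\ell - 1} T_\ell T_{\ell - 1}$ for (5), or $T_\ell T_{\ell + 1} T_\ell = T_{\ell + 1} T_\ell T_{\ell + 1}$ for (6), exactly once in order to convert the prepended $T_\ell$ into a trailing $T_{\ell - 1}$ (resp. $T_{\ell + 1}$). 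This trailing symmetry can then be commuted past the remaining factors until it meets $E_m$ (resp. $E_k$), where it acts trivially since $\ell - 1 \leq m - 2$ (resp. $\ell + 1 \geq k + 2$). The exclusion $\ell \notin \{k - 1, k, m, m + 1\}$ is precisely what guarantees that a single application of one braid relation suffices and that the resulting trailing symmetry ultimately fixes the rightmost generator.
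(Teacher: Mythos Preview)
Your proof is correct, but the approach differs from the paper's in an interesting way.  The paper works directly with the nested $q^{-1}$-commutator expressions: for part~(\ref{T_E ascending, remove}) it uses the $q$-associativity identity~(\ref{q-associativity}) to split $\mathbf{E}_{k,\dots,\ell} = [\mathbf{E}_{k,\dots,\ell-2},\mathbf{E}_{\ell-1,\ell}]$ and then applies $T_\ell$ to each factor via formula~(\ref{Lusztig symmetries}); for parts~(\ref{T_E ascending, invariant}) and~(\ref{T_E descending, invariant}) it again decomposes the commutator at the indices $\ell-1,\ell,\ell+1$ and tracks how $T_\ell$ reshuffles the pieces, with separate subcases for $\ell+1 < m$ versus $\ell+1 = m$.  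By contrast, you first realize $\mathbf{E}_{k,\dots,m}$ and $\mathbf{E}_{m,\dots,k}$ as braid words $T_k\cdots T_{m-1}(E_m)$ and $T_m\cdots T_{k+1}(E_k)$, and then the remaining four parts become pure braid-group manipulations: commuting $T_\ell$ past distant generators and, in the interior case of (\ref{T_E ascending, invariant})--(\ref{T_E descending, invariant}), a single application of the braid relation $T_\ell T_{\ell\mp1}T_\ell = T_{\ell\mp1}T_\ell T_{\ell\mp1}$ followed by pushing the resulting $T_{\ell\mp1}$ to the right end where it fixes $E_m$ (resp.\ $E_k$).  Your route is more uniform and avoids the subcase analysis the paper carries out for (\ref{T_E ascending, invariant}) and (\ref{T_E descending, invariant}); the paper's route, on the other hand, stays at the level of $q$-commutators and the explicit action~(\ref{Lusztig symmetries}), which dovetails with the $q$-associativity arguments used repeatedly later in Section~\ref{proof of defining relns, section}.
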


\begin{proof}

    Parts \ref{T_E ascending, prepend} and \ref{T_E descending, prepend} follow
    from the definition of the braid group action \ref{Lusztig symmetries}
    together with the fact that the braid group acts via algebra automorphisms.

    To prove part \ref{T_E ascending, remove}, we first consider the case when
    $\ell = k + 1$. In this setting, the desired result follows directly from
    Proposition \ref{braid action 1}, $T_\ell(\mathbf{E}_{\ell - 1, \ell}) =
    T_\ell T_{\ell - 1} (E_\ell) = E_{\ell - 1}$. However, if $\ell > k + 1$,
    we can use \ref{q-associativity} to write $\mathbf{E}_{k, k + 1,\dots,
    \ell} = [\mathbf{E}_{k, k + 1,\dots, \ell - 2}, \mathbf{E}_{\ell - 1,
    \ell}]$. From \ref{Lusztig symmetries}, $E_p$ ($p = k, k + 1,\dots, \ell -
    2$) is fixed by $T_\ell$. Thus
    \begin{align*}
        T_\ell(\mathbf{E}_{k, k + 1,\dots,\ell})
        &=
        T_\ell([\mathbf{E}_{k, k + 1,\dots, \ell - 2}, \mathbf{E}_{\ell - 1,
        \ell}])
        \\
        &=
        [T_\ell(\mathbf{E}_{k, k + 1,\dots, \ell - 2}), T_\ell(\mathbf{E}_{\ell
        - 1, \ell})]
        \\
        &=
        [\mathbf{E}_{k, k + 1,\dots, \ell - 2}, E_{\ell - 1}]
        \\
        &=
        \mathbf{E}_{k, k + 1,\dots, \ell - 1}.
    \end{align*}
    \noindent Part \ref{T_E descending, remove} can be proved in a manner
    similar to part \ref{T_E ascending, remove}.

    For part \ref{T_E descending, invariant}, consider first the case when
    $\ell < k - 1$ or $\ell > m + 1$. In this setting, the result follows
    directly from \ref{Lusztig symmetries}. Next, suppose $k < \ell < m$ and
    $\ell + 1 < m$. By \ref{q-associativity},
    \[
        \mathbf{E}_{m, m - 1,\dots, k} = [[[\cdots [\mathbf{E}_{m, m - 1,\dots,
        \ell + 2}, \mathbf{E}_{\ell + 1, \ell}], E_{\ell - 1}], \cdots ], E_k].
    \]
    \noindent Thus,
    \[
        T_\ell(\mathbf{E}_{m, m - 1,\dots k}) = [[[\cdots
        [T_\ell(\mathbf{E}_{m, m - 1,\dots, \ell + 2}), T_\ell
        (\mathbf{E}_{\ell + 1, \ell}) ], T_\ell (E_{\ell - 1})], \cdots ],
        T_\ell (E_k)].
    \]
    \noindent By \ref{Lusztig symmetries}, $E_{\ell - 2}, E_{\ell - 3},\dots,
    E_k$, and $\mathbf{E}_{m, m - 1,\dots, \ell + 2}$ are fixed by $T_\ell$ and
    $T_\ell(E_{\ell - 1}) = \mathbf{E}_{\ell, \ell - 1}$, whereas by
    Proposition \ref{braid action 1}, $T_\ell([E_{\ell + 1}, E_\ell]) =
    \mathbf{T}_{\ell,\ell + 1}(E_\ell) = E_{\ell + 1}$. Hence, we obtain
    \begin{align*}
        T_\ell(\mathbf{E}_{m, m - 1,\dots, \ell})
        &=
        [[\cdots [\mathbf{E}_{m, m - 1,\dots, \ell + 1}, \mathbf{E}_{\ell, \ell
        - 1} ], \cdots ], E_k]
        \\
        &=
        \mathbf{E}_{m, m - 1,\dots, k}.
    \end{align*}
    \noindent Now suppose $k < \ell < m$ and $m = \ell + 1$.  In this case we
    have
    \begin{align*}
        T_\ell(\mathbf{E}_{m, m - 1,\dots, k})
        &=
        T_\ell([[\cdots [\mathbf{E}_{\ell + 1, \ell}, E_{\ell - 1}], \cdots ],
        E_k])
        \\
        &=
        [[\cdots [E_{\ell + 1}, \mathbf{E}_{\ell, \ell - 1}], \cdots ], E_k]
        \\
        &=
        \mathbf{E}_{m, m - 1,\dots, k}.
    \end{align*}
    \noindent Part \ref{T_E ascending, invariant} can be proved in a manner
    similar to part \ref{T_E descending, invariant}.

\end{proof}

\noindent The following lemma gives some commutation relations among certain
nested $q^{-1}$-commutators.

\begin{lemma}

    \label{EE}

    For all $1 \leq k \leq \ell < m < n$,

    \begin{enumerate}

        \item \label{EE q-commute, ascending} $\mathbf{E}_{k, k + 1, \dots,
            \ell} \mathbf{E}_{k, k + 1, \dots, m} = q\mathbf{E}_{k, k + 1,
            \dots, m} \mathbf{E}_{k, k + 1, \dots, \ell}$.

    \end{enumerate}

    For all $1 \leq k < \ell \leq m < n$,

    \begin{enumerate}

        \setcounter{enumi}{1}

        \item \label{EE q-commute, descending} $\mathbf{E}_{m, m - 1, \dots,
            \ell} \mathbf{E}_{m, m - 1, \dots, k} = q\mathbf{E}_{m, m - 1,
            \dots, k} \mathbf{E}_{m, m - 1, \dots, \ell}$.

    \end{enumerate}

    For all $1 \leq k < \ell \leq m < p < n$,

    \begin{enumerate}

        \setcounter{enumi}{2}

        \item \label{EE commute, ascending} $\mathbf{E}_{k, k + 1,\dots,
            p}\mathbf{E}_{\ell, \ell + 1,\dots, m} = \mathbf{E}_{\ell, \ell +
            1,\dots, m}\mathbf{E}_{k, k + 1,\dots, p}$,

        \item \label{EE commute, descending} $\mathbf{E}_{p, p - 1,\dots,
            k}\mathbf{E}_{m, m - 1,\dots, \ell} = \mathbf{E}_{m, m - 1,\dots,
            \ell}\mathbf{E}_{p, p - 1,\dots, k}$,

        \item \label{EE commute, ascending descending} $\mathbf{E}_{k, k +
            1,\dots, p}\mathbf{E}_{m, m - 1,\dots, \ell} = \mathbf{E}_{m, m -
            1,\dots, \ell}\mathbf{E}_{k, k + 1,\dots, p}$,

        \item \label{EE commute, descending ascending} $\mathbf{E}_{p, p -
            1,\dots, k}\mathbf{E}_{\ell, \ell + 1,\dots, m} = \mathbf{E}_{\ell,
            \ell + 1,\dots, m} \mathbf{E}_{p, p - 1,\dots, k}$.

    \end{enumerate}

    For all $1 < k < n - 1$,

    \begin{enumerate}

        \setcounter{enumi}{6}

        \item \label{EE qhat relation} $E_k\mathbf{E}_{k, k - 1, k + 1} -
            \mathbf{E}_{k, k - 1, k + 1}E_k = \widehat{q}\mathbf{E}_{k, k -
            1}\mathbf{E}_{k, k + 1}$,

        \item \label{EE qhat relation 2} $[E_{k + 1}, \mathbf{E}_{k - 1, k}]E_k
            - E_k[E_{k + 1}, \mathbf{E}_{k - 1, k}] = \widehat{q} \mathbf{E}_{k
            + 1, k} \mathbf{E}_{k - 1, k}$.

    \end{enumerate}

\end{lemma}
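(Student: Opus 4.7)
The plan is to prove each part case-by-case, using the $q$-Serre relations, the associativity identity \ref{q-associativity}, the index-swap identity \ref{q-commutativity}, and Lemma \ref{T_E} as the main toolkit. The recurring strategy is a Lusztig-symmetry reduction: if an appropriate $T_k$ acts in a controlled way on both sides of a claimed identity, we can strip off an endpoint of the sequences and reduce to a shorter case, eventually landing on a direct application of the $q$-Serre relations.

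For parts \ref{EE q-commute, ascending} and \ref{EE q-commute, descending}, I would first handle the base case of \ref{EE q-commute, ascending} with $\ell = k$, namely $E_k \mathbf{E}_{k, k+1, \dots, m} = q \mathbf{E}_{k, k+1, \dots, m} E_k$, by induction on $m - k$. The initial step $m = k + 1$ is a direct rewrite of the $q$-Serre relation \ref{q-Serre 2}. For the inductive step, write $\mathbf{E}_{k, \dots, m} = [\mathbf{E}_{k, \dots, m-1}, E_m]$, expand, and pull the inductive hypothesis through, using $E_k E_m = E_m E_k$ (which is valid by \ref{q-Serre 1} because $m > k + 1$). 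The general case $(k, \ell, m)$ with $\ell > k$ then follows by observing that, by \ref{T_E ascending, prepend}, both factors are the images under $T_k$ of the corresponding factors for $(k+1, \ell, m)$; iterating $T_k, T_{k+1}, \dots, T_{\ell - 1}$ reduces the claim to the base case $(\ell, \ell, m)$. Part \ref{EE q-commute, descending} is symmetric, using \ref{T_E descending, prepend}.

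For parts \ref{EE commute, ascending}--\ref{EE commute, descending ascending}, which assert plain commutativity of an inner nested commutator with an outer one whose index range strictly contains the inner range, the same style of symmetry reduction applies. When $\ell > k + 1$, the symmetry $T_k$ fixes the inner commutator by \ref{T_E ascending, invariant} or \ref{T_E descending, invariant} while truncating the outer commutator by one index via \ref{T_E ascending, remove} or \ref{T_E descending, remove}; iterating, and similarly applying $T_p$ from the other end, one reduces to the case where the outer range exceeds the inner range by one index on each side. The remaining identity is then verifiable by direct manipulation using \ref{q-associativity} together with parts \ref{EE q-commute, ascending} and \ref{EE q-commute, descending} already established.

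Parts \ref{EE qhat relation} and \ref{EE qhat relation 2} are short identities that I would verify by directly expanding both sides via the definition of the $q^{-1}$-commutator and simplifying with the $q$-Serre relation for $|i - j| = 1$; the $\widehat{q}$ coefficient on the right-hand side emerges naturally from the rearrangement. I expect the main obstacle to be the bookkeeping in parts \ref{EE commute, ascending}--\ref{EE commute, descending ascending}: each invocation of Lemma \ref{T_E} requires verifying that the symmetry index $k$ sits in the proper position relative to both sequences, and the correct boundary cases must be identified. Once this combinatorics is laid out cleanly, the arguments collapse to the $q$-Serre relations together with the base identities proved in parts \ref{EE q-commute, ascending} and \ref{EE q-commute, descending}.
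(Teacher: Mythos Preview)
Your proposal is correct and follows essentially the same strategy as the paper: reduce via Lusztig symmetries (Lemma~\ref{T_E}) to a short identity that follows from the $q$-Serre relations. For parts \ref{EE q-commute, ascending}, \ref{EE q-commute, descending}, \ref{EE qhat relation}, and \ref{EE qhat relation 2} your outline matches the paper's argument almost verbatim.

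The only noteworthy difference is in parts \ref{EE commute, ascending}--\ref{EE commute, descending ascending}. You strip the outer commutator from both ends until its index range exceeds the inner one by exactly one on each side, and then propose to finish ``by direct manipulation using \ref{q-associativity} together with parts \ref{EE q-commute, ascending} and \ref{EE q-commute, descending}.'' That residual identity (e.g.\ that $\mathbf{E}_{\ell-1,\ell,\dots,m,m+1}$ commutes with $\mathbf{E}_{\ell,\dots,m}$) does not fall out of parts \ref{EE q-commute, ascending}--\ref{EE q-commute, descending} alone; it still requires either a small direct computation with the $q$-Serre relations or a further symmetry reduction. The paper avoids this by choosing a single composed automorphism that carries the pair all the way down to two commuting Chevalley generators $E_{m-1}$ and $E_{m+1}$ (for part \ref{EE commute, ascending}, it first applies $\mathbf{T}_{\ell,\dots,m-1}^{-1}$ to collapse the \emph{inner} factor to $E_m$, then two more automorphisms to collapse the outer factor). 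This is slicker and sidesteps the residual computation, but your iterative endpoint-stripping works as well once you carry it through; just be aware that the last step needs more than an appeal to parts \ref{EE q-commute, ascending}--\ref{EE q-commute, descending}.
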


\begin{proof}

    To prove part \ref{EE q-commute, ascending}, we first define $\varphi :=
    \mathbf{T}_{k, k + 1,\dots, \ell - 1}^{-1}$.  From part \ref{T_E ascending,
    prepend} of Lemma \ref{T_E}, $\varphi(\mathbf{E}_{k, k + 1, \dots, \ell}) =
    E_\ell$ and $\varphi(\mathbf{E}_{k, k + 1, \dots, m}) = \mathbf{E}_{\ell,
    \ell + 1, \dots, m}$. Observe that the $q$-Serre relation \ref{q-Serre 2}
    is equivalent to $E_r\mathbf{E}_{r,s} = q\mathbf{E}_{r,s}E_r$ whenever $|r
    - s| = 1$. Thus $E_\ell\mathbf{E}_{\ell, \ell + 1} = q\mathbf{E}_{\ell,
    \ell + 1}E_\ell$.  Furthermore, by \ref{q-Serre 1}, $E_\ell$ commutes with
    $E_{\ell + 2}, E_{\ell + 3},\dots, E_m$. Therefore $E_\ell
    \mathbf{E}_{\ell, \ell + 1, \dots, m} = q \mathbf{E}_{\ell, \ell + 1,
    \dots, m} E_\ell$. Since $\varphi$ is an automorphism of ${\mathcal
    U}_q(\mathfrak{sl}(n))$, $\mathbf{E}_{k, k + 1, \dots, \ell} \mathbf{E}_{k,
    k + 1, \dots, m} = q\mathbf{E}_{k, k + 1, \dots, m} \mathbf{E}_{k, k + 1,
    \dots, \ell}$.

    For part \ref{EE q-commute, descending}, let $\varphi := \mathbf{T}_{m, m -
    1,\dots, \ell + 1}^{-1}$.  From part \ref{T_E descending, prepend} of Lemma
    \ref{T_E}, $\varphi(\mathbf{E}_{m, m - 1, \dots, \ell}) = E_\ell$ and
    $\varphi(\mathbf{E}_{m, m - 1, \dots, k}) = \mathbf{E}_{\ell, \ell - 1,
    \dots, k}$. The $q$-Serre relations \ref{q-Serre 2} and \ref{q-Serre 1}
    imply $E_\ell \mathbf{E}_{\ell, \ell - 1, \dots, k} = q \mathbf{E}_{\ell,
    \ell - 1, \dots, k} E_\ell$. Thus, $0 = \varphi^{-1}(E_\ell
    \mathbf{E}_{\ell, \ell - 1, \dots, k} - q \mathbf{E}_{\ell, \ell - 1,
    \dots, k} E_\ell) = \mathbf{E}_{m, m - 1, \dots, \ell} \mathbf{E}_{m, m -
    1, \dots, k} - q\mathbf{E}_{m, m - 1, \dots, k} \mathbf{E}_{m, m - 1,
    \dots, \ell}$.

    For part \ref{EE commute, ascending}, we first define $\varphi :=
    \mathbf{T}_{\ell, \ell + 1,\dots, m - 1}^{-1}$.  From part \ref{T_E
    descending, prepend} of Lemma \ref{T_E}, $\varphi(\mathbf{E}_{\ell, \ell +
    1,\dots, m}) = E_m$, whereas by part \ref{T_E ascending, invariant} of
    Lemma \ref{T_E}, $\varphi(\mathbf{E}_{k, k + 1,\dots, p}) = \mathbf{E}_{k ,
    k + 1,\dots, p}$. Next, define $\psi := \mathbf{T}_{k, k + 1,\dots, m
    -2}^{-1} \circ \mathbf{T}_{m + 2, m + 3,\dots, p}$. From \ref{Lusztig
    symmetries}, $\psi(E_m) = E_m$, whereas parts \ref{T_E ascending, prepend}
    and \ref{T_E ascending, remove} of Lemma \ref{T_E} imply that
    $\psi(\mathbf{E}_{k, k + 1,\dots, p}) = \mathbf{E}_{m - 1, m, m + 1}$.
    Finally we define $\theta := T_{m, m + 1}$. We observe next that
    Proposition \ref{braid action 1} implies $\theta(E_m) = E_{m + 1}$, whereas
    part \ref{T_E ascending, remove} of Lemma \ref{T_E} implies
    $\theta(\mathbf{E}_{m - 1, m, m + 1}) = E_{m - 1}$. Since $\varphi$,
    $\psi$, and $\theta$ are automorphisms of ${\mathcal
    U}_q(\mathfrak{sl}(n))$, the composition $\theta \circ \psi \circ \varphi$
    is also an automorphism of ${\mathcal U}_q(\mathfrak{sl}(n))$. Since $E_{m
    + 1}$ and $E_{m - 1}$ commute and are the images of $\mathbf{E}_{\ell, \ell
    + 1,\dots, m}$ and $\mathbf{E}_{k, k + 1,\dots, p}$ respectively under
    $\theta \circ \psi \circ \varphi$, this implies that $\mathbf{E}_{\ell,
    \ell + 1,\dots, m}$ and $\mathbf{E}_{k, k + 1,\dots, p}$ also commute.
    Parts \ref{EE commute, descending}, \ref{EE commute, ascending descending},
    and \ref{EE commute, descending ascending} can be proved similarly.

    To prove part \ref{EE qhat relation}, we first use the $q$-Serre relation
    \ref{q-Serre 2}, which is equivalent to $E_k\mathbf{E}_{k, k - 1} =
    q\mathbf{E}_{k, k - 1}E_k$ to get
    \begin{align*}
        E_k\mathbf{E}_{k, k - 1, k + 1}
        &=
        E_k(\mathbf{E}_{k, k -1}E_{k + 1} - q^{-1}E_{k + 1}\mathbf{E}_{k, k -
        1})
        \\
        &=
        q\mathbf{E}_{k, k -1}E_kE_{k + 1} - q^{-1}E_kE_{k + 1}\mathbf{E}_{k, k
        - 1}.
    \end{align*}
    \noindent Next we make the substitution $E_kE_{k + 1} = \mathbf{E}_{k, k +
    1} + q^{-1}E_{k + 1}E_k$ to obtain
    \begin{equation}
        \begin{aligned}
            \label{EE, part 7}
            E_k\mathbf{E}_{k, k - 1, k + 1}
            &=
            q\mathbf{E}_{k, k - 1}\mathbf{E}_{k, k + 1} - q^{-1}\mathbf{E}_{k,
            k + 1}\mathbf{E}_{k, k - 1}
            \\
            &
            \hspace{5mm} + \mathbf{E}_{k, k - 1}E_{k + 1}E_k - q^{-2}E_{k +
            1}E_k\mathbf{E}_{k, k - 1}.
        \end{aligned}
    \end{equation}
    \noindent Since $\mathbf{E}_{k, k - 1} = T_k(E_{k - 1})$, $\mathbf{E}_{k, k
    + 1} = T_k(E_{k + 1})$ and $E_{k - 1}$ and $E_{k + 1}$ commute, this
    implies that $\mathbf{E}_{k, k - 1}$ and $\mathbf{E}_{k, k + 1}$ commute
    also.  Hence, the first two terms in \ref{EE, part 7} above involving the
    products $\mathbf{E}_{k, k - 1}\mathbf{E}_{k, k + 1}$ and $\mathbf{E}_{k, k
    + 1}\mathbf{E}_{k, k - 1}$ can be combined to get $E_k\mathbf{E}_{k, k - 1,
    k + 1} = \widehat{q}\mathbf{E}_{k, k - 1}\mathbf{E}_{k, k + 1} +
    \mathbf{E}_{k, k - 1}E_{k + 1}E_k - q^{-2}E_{k + 1}E_k\mathbf{E}_{k, k -
    1}$. We again use $E_k\mathbf{E}_{k, k - 1} = q\mathbf{E}_{k, k - 1}E_k$ to
    obtain
    \begin{align*}
        E_k\mathbf{E}_{k, k - 1, k + 1}
        &=
        \widehat{q}\mathbf{E}_{k, k - 1}\mathbf{E}_{k, k + 1} + \mathbf{E}_{k,
        k - 1}E_{k + 1}E_k - q^{-1}E_{k + 1}\mathbf{E}_{k, k - 1}E_k
        \\
        &=
        \widehat{q}\mathbf{E}_{k, k - 1}\mathbf{E}_{k, k + 1} + \mathbf{E}_{k,
        k - 1, k + 1}E_k.
    \end{align*}
    To prove part \ref{EE qhat relation 2}, we first use the $q$-Serre relation
    \ref{q-Serre 2}, which is equivalent to $E_k\mathbf{E}_{k - 1, k} =
    q^{-1}\mathbf{E}_{k - 1, k}E_k$ to get
    \begin{align*}
        E_k[E_{k + 1}, \mathbf{E}_{k - 1, k}]
        &=
        E_k (E_{k + 1}\mathbf{E}_{k - 1, k} - q^{-1}\mathbf{E}_{k - 1, k}E_{k +
        1})
        \\
        &=
        E_kE_{k + 1}\mathbf{E}_{k - 1, k} - q^{-2}\mathbf{E}_{k - 1, k}E_kE_{k
        + 1}.
    \end{align*}
    \noindent Next we substitute $E_kE_{k + 1}$ with $-q\mathbf{E}_{k + 1, k}
    +qE_{k + 1}E_k$ to obtain
    \begin{equation}
        \begin{aligned}
            \label{EE, part 8}
            E_k[E_{k + 1}, \mathbf{E}_{k - 1, k}]
            &=
            -q\mathbf{E}_{k + 1, k}\mathbf{E}_{k - 1, k} + q^{-1}\mathbf{E}_{k
            - 1, k}\mathbf{E}_{k + 1, k}
            \\
            &
            \hspace{5mm}- q^{-1}\mathbf{E}_{k - 1, k}E_{k + 1}E_k + E_{k +
            1}\mathbf{E}_{k - 1, k}E_k.
        \end{aligned}
    \end{equation}
    \noindent Observe that $\mathbf{E}_{k - 1, k}$ and $\mathbf{E}_{k + 1, k}$
    commute. Hence the first two terms in \ref{EE, part 8} above involving the
    products $\mathbf{E}_{k - 1, k}\mathbf{E}_{k + 1, k}$ and $\mathbf{E}_{k +
    1, k}\mathbf{E}_{k - 1, k}$ can be combined to get
    \[
        E_k[E_{k + 1}, \mathbf{E}_{k - 1, k}] = -\widehat{q}\mathbf{E}_{k + 1,
        k}\mathbf{E}_{k - 1, k} - q^{-1}\mathbf{E}_{k - 1, k}E_{k + 1}E_k +
        E_{k + 1} \mathbf{E}_{k - 1, k}E_k,
    \]
    \noindent which is equivalent to the identity as written in part \ref{EE
    qhat relation 2}.

\end{proof}

Recall the definition of the function $r:[n] \to J \cup \{0\}$,
\[
    r(m) := \operatorname{max}\{k \in J \cup \{0\} \mid k < m\}.
\]
\noindent The following lemma tells us how each root vector in
${\mathcal U}_q(\mathfrak{n}_J)$ can be written as a nested $q^{-1}$-commutator.

\begin{lemma}
    \label{lemma 1}
    If $(i, j) \in \Phi_J$, then
    \[
        X_{ij} = {\mathbf E}_{r(j), r(j) - 1, \cdots, i, r(j) + 1, r(j) + 2,
        \dots, j - 1}.
    \]

\end{lemma}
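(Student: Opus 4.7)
The plan is to prove the formula by analyzing each letter of the given reduced expression $w_J = S_t S_{t-1}\cdots S_1$. I would label positions in this expression by triples $(k, r, c)$ indicating the block $S_k$, row $r$ within $S_k$, and column $c$ within row $r$, so that the letter at this position is $s_{i_k - r + c}$. Setting $i = i_k - r + 1$, I would factor the corresponding root vector as $X_{ij} = V_k(T_{\operatorname{inner}}(E_{i_k - r + c}))$, where $T_{\operatorname{inner}}$ is the product of Lusztig symmetries for the letters of $S_k$ preceding the current one and $V_k = T_{S_t}\cdots T_{S_{k+1}}$ is the product for the earlier blocks.

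First, I would show by nested induction on $(r, c)$ that $T_{\operatorname{inner}}(E_{i_k - r + c})$ equals the ``local'' hook $\mathbf{E}_{i_k, i_k - 1, \ldots, i, i_k + 1, i_k + 2, \ldots, i_k + c - 1}$. The ascending leg is built up by iterated application of Lemma \ref{T_E}(1), the descending leg by Lemma \ref{T_E}(3), and parts (5)--(6) of Lemma \ref{T_E} together with braid-group commutativity handle the Lusztig symmetries $T_\ell$ acting on indices far from the hook. When $T_\ell$ sits adjacent to a leg of the hook, the key collapsing identity is $T_\ell T_{\ell + 1}(E_\ell) = E_{\ell + 1}$, which follows from Proposition \ref{braid action 1}; this lets me simplify subexpressions that would otherwise leave $\mathcal{U}_q^+$ intermediate computations.

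Second, I would prove by downward induction on $k$ that $V_k$ transforms the local hook into the full hook $\mathbf{E}_{r(j), r(j) - 1, \ldots, i, r(j) + 1, \ldots, j - 1}$ predicted by the lemma, where $j$ is the image of $i_k + c$ under the permutation $S_t\cdots S_{k+1}$. Since this permutation is a product of simple reflections with indices $\geq i_k + 1$, it fixes $\{1, \ldots, i_k\}$ and only rearranges $\{i_k + 1, \ldots, n\}$, so the lower portion of the descending leg is preserved while the part of the hook supported on indices above $i_k$ reshapes under each successive $T_{S_m}$. The main obstacle will be the combinatorial bookkeeping at this stage: I would need to verify that the net action of $T_{S_{k+1}}, \ldots, T_{S_t}$ absorbs the local ascending leg into a longer descending leg $\{r(j), \ldots, i_k\}$ and produces the correct new ascending leg $\{r(j) + 1, \ldots, j - 1\}$, using the same toolkit of Lemma \ref{T_E} identities and Proposition \ref{braid action 1}.
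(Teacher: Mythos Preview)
Your approach is sound and would succeed, but it is organized quite differently from the paper's argument, and the second step would be substantially heavier than you may anticipate.

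The paper does not compute $T_{\text{prefix}}(E_{\text{letter}})$ directly at all. Instead, it identifies explicit subwords $v_{ij}$, $u_i$, $w_{N(i)}$ of $w_J$ so that the prefix $W_{ij}$ with $X_{ij} = T_{W_{ij}}(E_{\ell(i,j)})$ factors conveniently, and then establishes two global recursion relations
\[
X_{i-1,j} = [X_{ij},\, E_{i-1}], \qquad X_{i,\,j+1} = [X_{ij},\, E_j] \ \ (\text{when } j \notin J),
\]
by checking that the relevant intermediate Weyl group elements send the needed simple roots to simple roots, so that Proposition~\ref{braid action 1} applies in one stroke. Starting from the base cases $X_{i_k,\,i_k+1} = E_{i_k}$ (again by Proposition~\ref{braid action 1}), iterating the first recursion produces the entire descending leg, and then iterating the second produces the ascending leg. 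No explicit simplification of long products of Lusztig symmetries is ever required.

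Your route, by contrast, computes the Lusztig symmetries head-on. Step~1 (the local hook within a single $S_k$) works exactly as you describe, with the collapsing identity $T_\ell T_{\ell+1}(E_\ell) = E_{\ell+1}$ doing the essential work. Step~2 is where the cost lies: the ``downward induction on $k$'' is the right idea, but to make it go through you will need to recognize that after rewriting the local hook via \eqref{q-commutativity} as $[\mathbf{E}_{i_k, i_k+1,\ldots},\,\mathbf{E}_{i_k-1,\ldots,i}]$, the block $T_{S_{k+1}}$ acts only on the first factor, and the resulting object must again be massaged into a hook with pivot $i_{k+1}$ so that the induction hypothesis for $V_{k+1}$ applies. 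This works, but each such transition is essentially a repetition of the step~1 computation, and the bookkeeping of where the ascending leg lands under each $S_m$ is precisely what the paper's Weyl-group-level recursions sidestep. What your approach buys is conceptual transparency about how the hook is assembled block by block; what the paper's buys is a dramatically shorter argument once the two recursions are isolated.
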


\begin{proof}

    Let $J = \{i_1 < \cdots < i_t\} \subseteq [n - 1]$.  For $k \in [t]$, let
    $w_k\in W$ be the initial segment of $w_J$ defined as $w_k := S_t S_{t-1}
    \cdots S_{k+1} \in W$ (recall the definition of $S_k$ in \ref{S_k's}).
    Observe that $w_k(p) = p$ whenever $p \leq i_k$, whereas $w_k(p) = w_J(p -
    i_k)$ if $p > i_k$.

    For $p \leq n$, define $M(p) := \min\{r\in J \cup \{n\} \mid p \leq r\}$.
    Assume $(i, j) \in \Phi_J$. We use the abbreviation $\ell(i, j) = i +
    w_J^{-1}(j) - 1$. The restrictions imposed on $i$ and $j$ imply that
    $\ell(i, j) < n$. Define the Weyl group elements $v_{ij} :=
    s_is_{i+1}\cdots s_{\ell(i, j) - 1} \in W$ and $ u_i := (s_{M(i)}\cdots
    s_{n-1})(s_{M(i) - 1}\cdots s_{n-2})\cdots (s_{i+1}\cdots s_{n - M(i) + i})
    \in W$.  Let $N(p) := 1 + \#\{r\in J \mid r < p\}$ and let $W_{ij}\in W$ be
    the initial segment of $w_J$ defined as $W_{ij} : = w_{N(i)} u_i v_{ij}$.
    Since $W_{ij} s_{\ell(i, j)}$ is also an initial segment of $w_J$, it
    follows that $W_{ij}(\alpha_{\ell(i, j)})$ is a positive Lusztig root.  In
    fact, we have
    \[
        W_{ij}(\alpha_{\ell(i, j)}) = w_{N(i)}u_i(e_i - e_{\ell(i, j) + 1}) =
        w_{N(i)} (e_i - e_{M(i) + w_J^{-1}(j)}) = e_i - e_j.
    \]
    \noindent Therefore, the simple root $\alpha_{i_k}$ is a Lusztig root
    because $W_{i_k, i_k + 1}(\alpha_{\ell(i_k, i_k + 1)}) = e_{i_k} - e_{i_k +
    1} = \alpha_{i_k}$.  Proposition \ref{braid action 1} implies that
    $T_{W_{i_k, i_k + 1}}(E_{\ell(i_k, i_k + 1)}) = E_{i_k}$. Thus, $X_{i_k,
    i_k + 1} = E_{i_k}$.  More generally, $X_{ij} = T_{W_{ij}}(E_{\ell(i,
    j)})$.

    We assume now that $i > 1$ and $(i, j) \in \Phi_J$. Define the Weyl
    group elements $y_{ij} : = (s_{\ell(i, j)} \cdots s_{n - M(i) + i - 1})
    v_{i - 1, j}$. We have $W_{i - 1, j} = W_{ij}y_{ij}$.  Thus, $X_{i - 1, j}
    = T_{w_{N(i)}} T_{u_i} T_{v_{ij}} T_{y_{ij}} (E_{\ell(i, j) - 1})$.  Since
    \[
        T_{y_{ij}}(E_{\ell(i, j) - 1}) = T_{v_{i - 1, j}} T_{\ell(i, j)}
        (E_{\ell(i, j) - 1}) = T_{v_{i - 1, j}} ([E_{\ell(i, j)}, E_{\ell(i, j)
        - 1}])
    \]
    \noindent and the braid group generators act via algebra automorphisms, we
    have $X_{i - 1, j} = T_{w_{N(i)}} T_{u_i} T_{v_{ij}} ([T_{v_{i - 1, j}}
    (E_{\ell(i, j)}), T_{v_{i - 1, j}} (E_{\ell(i, j) - 1})])$.  However, since
    $v_{i -1, j} (\alpha_{\ell(i, j)}) = \alpha_{\ell(i, j)}$ and $v_{ij} v_{i
    - 1, j} (\alpha_{\ell(i, j) - 1}) = \alpha_{i - 1}$, Proposition \ref{braid
    action 1} implies
    \[
        X_{i - 1, j} = T_{w_{N(i)}} T_{u_i} ([T_{v_{ij}} (E_{\ell(i, j)}), E_{i
        - 1}]).
    \]
    \noindent Finally, since $w_{N(i)} u_i (\alpha_{i - 1}) = \alpha_{i - 1}$,
    we have $X_{i - 1, j} = [X_{ij}, E_{i - 1}]$.  Hence, for all $r \in J$ and
    $1 \leq i \leq r$,  we iteratively get $X_{i, r + 1} = \mathbf{E}_{r, r -
    1, \dots, i}$.

    Next suppose $i$ and $j$ are a pair of integers in $[n - 1]$ such that $j
    \not\in J$ and $(i, j) \in \Phi_J$. Hence, $w_J^{-1}(j + 1) =
    w_J^{-1}(j) + 1$, $\ell(i, j + 1) = \ell(i, j) + 1$, and $W_{i, j + 1} =
    W_{ij}s_{\ell(i, j)}$. Hence,
    \[
        X_{i, j + 1} = T_{W_{i, j + 1}}(E_{\ell(i, j + 1)}) =
        T_{W_{ij}}T_{s_{\ell(i, j)}}(E_{\ell(i, j) + 1}) =
        T_{W_{ij}}([E_{\ell(i, j)}, E_{\ell(i, j) + 1}]).
    \]
    \noindent However, since
    \begin{align*}
        W_{ij}(\alpha_{\ell(i, j) + 1})
        &=
        w_{N(i)} u_i v_{ij} (\alpha_{\ell(i, j) + 1})
        \\
        &=
        w_{N(i)} u_i (\alpha_{\ell(i, j) + 1})
        \\
        &=
        w_{N(i)}(\alpha_{M(i) + w_J^{-1}(j)})
        \\
        &=
        \alpha_j,
    \end{align*}
    \noindent it follows from Proposition\ref{braid action 1} that
    $T_{W_{ij}}(E_{\ell(i, j) + 1}) = E_j$.  Thus,
    \begin{align*}
        X_{i, j + 1}
        &=
        T_{W_{ij}}([E_{\ell(i, j)}, E_{\ell(i, j) + 1}])
        \\
        &=
        [T_{W_{ij}}(E_{\ell(i, j)}), T_{W_{ij}}(E_{\ell(i, j) + 1})]
        \\
        &=
        [X_{ij}, E_j].
    \end{align*}
    \noindent If $r \in J$ and $r + 1,\dots r + s \not\in J$, we iteratively
    obtain
    \begin{align*}
        X_{i, r + s + 1}
        &=
        [[[ \cdots [X_{i, r + 1}, E_{r + 1}], E_{r + 2}], \cdots ], E_{r + s}]
        \\
        &=
        [[[ \cdots [\mathbf{E}_{r, r - 1, \dots, i}, E_{r + 1}], E_{r + 2}],
        \cdots ], E_{r + s}]
        \\
        &=
        \mathbf{E}_{r, r - 1, \dots, i, r + 1, r + 2, \dots, r + s}.
    \end{align*}

\end{proof}

\noindent The next lemma tells us how Lusztig's symmetries act on the root
vectors of ${\mathcal U}_q(\mathfrak{n}_J)$.

\begin{lemma}

    \label{T_X}

    Suppose $(i, j) \in \Phi_J$.

    \begin{enumerate}

        \item \label{T_X column drop}If $j > r(j) + 1$, then $T_{j - 1}(X_{ij})
            = X_{i, j - 1}$.

        \item \label{T_X row raise} If $i < r(j)$, then $T_i(X_{ij}) = X_{i +
            1, j}$.

        \item \label{T_X invariant} If $k \in [n - 1]$ and $k \not\in \{i - 1,
            i, r(j), j - 1, j\}$, $T_k(X_{ij}) = X_{ij}$.

    \end{enumerate}

\end{lemma}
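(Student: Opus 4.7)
The plan is to express each root vector $X_{ij}$ via Lemma~\ref{lemma 1} as the nested $q^{-1}$-commutator
\[
    X_{ij} = \mathbf{E}_{r(j), r(j)-1, \dots, i, r(j)+1, r(j)+2, \dots, j-1},
\]
and then apply the transformation rules of Lemma~\ref{T_E} to its constituent pieces, using that each $T_k$ is an algebra automorphism. A key preliminary step is to establish two useful factorizations of $X_{ij}$, obtained via $q$-associativity (\ref{q-associativity}) and $q$-commutativity (\ref{q-commutativity}). Since $E_c$ with $c \leq r(j)$ commutes with $E_b$ for $b \geq r(j)+2$, iterated $q$-associativity yields the standard factorization
\[
    X_{ij} = [\mathbf{E}_{r(j), r(j)-1, \dots, i},\, \mathbf{E}_{r(j)+1, r(j)+2, \dots, j-1}] \qquad (j > r(j)+1).
\]
First shuffling the ascending tail past the inner entries of the descending tail (permissible because the relevant indices differ by at least $2$) gives the alternative factorization
\[
    X_{ij} = [\mathbf{E}_{r(j), r(j)+1, \dots, j-1},\, \mathbf{E}_{r(j)-1, r(j)-2, \dots, i}] \qquad (i < r(j)).
\]

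In the generic case, parts \ref{T_X column drop} and \ref{T_X row raise} follow by applying $T_k$ to the standard factorization. For part \ref{T_X column drop} when $j > r(j)+2$, part \ref{T_E ascending, remove} of Lemma~\ref{T_E} strips the last index from the ascending tail while part \ref{T_E descending, invariant} fixes the descending tail, which reassembles into $X_{i,j-1}$. For part \ref{T_X row raise}, part \ref{T_E descending, remove} of Lemma~\ref{T_E} trims the last index from the descending tail and part \ref{T_E ascending, invariant} fixes the ascending tail (the hypothesis $i < r(j)$ ensures $i \notin \{r(j), r(j)+1, j-1, j\}$), which reassembles into $X_{i+1,j}$. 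For part \ref{T_X invariant}, the hypothesis $k \notin \{i-1, i, r(j), j-1, j\}$ combined with parts \ref{T_E ascending, invariant} and \ref{T_E descending, invariant} of Lemma~\ref{T_E} shows $T_k$ fixes both factors of the standard factorization whenever $k \neq r(j)+1$.

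The main obstacle is the collection of edge cases in which the standard factorization is either unavailable (trivial ascending tail) or fails to isolate the action of $T_k$. In part \ref{T_X column drop}, the boundary case $j = r(j)+2$ forces $k = j-1 = r(j)+1$, which sits adjacent to the top index $r(j)$ of the descending tail; here I would switch to the alternative factorization when $i < r(j)$ (and compute directly from Lemma~\ref{lemma 1} when $i = r(j)$), reducing the computation to $T_{r(j)+1}(\mathbf{E}_{r(j), r(j)+1}) = E_{r(j)}$ via part \ref{T_E ascending, remove} of Lemma~\ref{T_E}, and then reabsorbing $E_{r(j)}$ into $\mathbf{E}_{r(j)-1, \dots, i}$ via $q$-associativity to recover $X_{i, j-1}$. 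In part \ref{T_X invariant}, the hypothesis still permits $k = r(j)+1$ whenever $j > r(j)+2$; this case is again handled via the alternative factorization, for which part \ref{T_E ascending, invariant} applies to the new ascending factor (since $r(j)+1 \neq j-1$) and part \ref{T_E descending, invariant} applies to the new descending factor (since $r(j)+1 > r(j) > r(j)-1 \geq i$), showing $T_{r(j)+1}(X_{ij}) = X_{ij}$.
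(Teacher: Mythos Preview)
Your approach is correct and essentially the same as the paper's: express $X_{ij}$ via Lemma~\ref{lemma 1} as a nested $q^{-1}$-commutator, factor it using \eqref{q-associativity} and \eqref{q-commutativity}, and apply the rules of Lemma~\ref{T_E}. Your upfront introduction of the ``standard'' and ``alternative'' factorizations is a clean way to organize what the paper derives case by case; the paper uses slightly different splittings (e.g., peeling off $\mathbf{E}_{j-2,j-1}$ rather than the full ascending tail in part~\ref{T_X column drop}), but the mechanism is identical.

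Two small edge cases you should make explicit: in part~\ref{T_X row raise} your argument invokes the standard factorization, which requires $j>r(j)+1$; when $j=r(j)+1$ the ascending tail is empty and $X_{ij}=\mathbf{E}_{r(j),\dots,i}$, so part~\ref{T_E descending, remove} of Lemma~\ref{T_E} applies directly. Similarly, in the $k=r(j)+1$ subcase of part~\ref{T_X invariant} your use of the alternative factorization assumes $i<r(j)$; when $i=r(j)$ one has $X_{ij}=\mathbf{E}_{r(j),r(j)+1,\dots,j-1}$ and part~\ref{T_E ascending, invariant} applies directly (note $k\ne j-1$ forces $j>r(j)+2$). These are trivially filled by the same methods and do not affect the correctness of your argument.
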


\begin{proof}

    For short, let $r = r(j)$. To prove part \ref{T_X column drop}, we first
    consider the case when $j > r + 2$. In this setting we can use the
    associativity property \ref{q-associativity} to write $X_{ij} =
    [\mathbf{E}_{r, r - 1,\dots, i, r + 1, r + 2,\dots, j - 3}, \mathbf{E}_{j -
    2, j - 1}]$. From \ref{Lusztig symmetries}, $\mathbf{E}_{r, r - 1,\dots, i,
    r + 1, r + 2,\dots, j - 3}$ is fixed by $T_{j - 1}$, and by Proposition
    \ref{braid action 1}, $T_{j-1} (\mathbf{E}_{j - 2, j - 1}) = \mathbf{T}_{j
    - 1, j - 2}(E_{j - 1}) = E_{j - 2}$. Hence $T_{j - 1}(X_{ij}) =
    [\mathbf{E}_{r, r - 1,\dots, i, r + 1, r + 2,\dots, j - 3}, E_{j - 2}] =
    X_{i, j - 1}$.  On the other hand, if $j = r + 2$, we can use
    \ref{q-commutativity} to rewrite $X_{ij}$ as $X_{ij} = \mathbf{E}_{r, r -
    1, \dots, i, r + 1} = \mathbf{E}_{r, r + 1, r - 1,\dots, i}$. By
    Proposition \ref{braid action 1}, $T_{j - 1}(\mathbf{E}_{r, r + 1}) = E_r$,
    whereas by part \ref{T_E descending, invariant} of Lemma \ref{T_E}, we have
    $T_{j - 1}(E_{r - 1, r - 2,\dots, i}) = E_{r - 1, r - 2,\dots, i}$.
    Therefore $T_{j - 1}(X_{ij}) = \mathbf{E}_{r, r - 1, \dots, i} = X_{i, j -
    1}$.

    In proving part \ref{T_X row raise} we first suppose $i + 1 < r$. We can
    use the associativity property \ref{q-associativity} to write $X_{ij} =
    [[[[ \cdots [\mathbf{E}_{r, \dots, i + 2}, \mathbf{E}_{i + 1, i}], E_{r +
    1}], E_{r + 2}], \cdots], E_{j - 1}]$.  By \ref{Lusztig symmetries},
    $T_i(E_k) = E_k$ ($k = r + 1, r + 2, \dots, j - 1$), by Proposition
    \ref{braid action 1}, $T_i(\mathbf{E}_{i + 1, i}) = E_{i + 1}$, and by part
    \ref{T_E descending, invariant} of Lemma \ref{T_E}, $T_i(\mathbf{E}_{r,
    \dots, i + 2}) = \mathbf{E}_{r, \dots, i + 2}$.  Hence $T_i(X_{ij}) = [[[
        \cdots [\mathbf{E}_{r, \dots, i + 1}, E_{r + 1}],  E_{r + 2}], \cdots
    ], E_{j - 1}] = X_{i + 1, j}$.  On the other hand, if $i + 1 = r$ then
    $X_{ij}$ can be written as $X_{ij} = \mathbf{E}_{i + 1, i, r + 1, r + 2,
    \dots, j - 1}$. In this case the result follows, again, by using $E_k$ ($k
    = r + 1, r + 2,\dots, j - 1$) is fixed by $T_i$ and $T_i(\mathbf{E}_{i + 1,
    i}) = E_{i + 1}$.

    For part \ref{T_X invariant}, we consider first the case when $k < i - 1$
    or $k > j$. Here, the result follows directly from the definition of the
    Lusztig symmetries \ref{Lusztig symmetries}. Now suppose $i < k < r(j)$.
    By part \ref{T_E descending, invariant} of Lemma \ref{T_E},
    $\mathbf{E}_{r(j), r(j) - 1,\dots, i}$ is fixed by the automorphism $T_k$.
    Furthermore, by \ref{Lusztig symmetries}, $E_{r(j) + 1}, E_{r(j) +
    2},\dots, E_{j - 1}$ are also fixed by $T_k$. Thus $X_{ij}$, which can be
    written as $[[[ \cdots [\mathbf{E}_{r(j), r(j) - 1,\dots, i}, E_{r(j) +
    1}], E_{r(j) + 2}], \cdots ],  E_{j - 1}]$, is also fixed by $T_k$.
    Finally suppose $r(j) < k < j - 1$.  We can use \ref{q-commutativity} to
    write $X_{ij}$ as $[[[ \cdots [\mathbf{E}_{r(j), r(j) + 1, \dots, j - 1},
    E_{r(j) - 1}], E_{r(j) - 2}],\cdots ], E_i]$.  From part \ref{T_E
    ascending, invariant} of Lemma \ref{T_E}, $\mathbf{E}_{r(j), r(j) +
    1,\dots, j - 1}$ is fixed by the automorphism $T_k$, while by \ref{Lusztig
    symmetries}, $E_{r(j) - 1}$, $E_{r(j) - 2},\dots, E_i$ are fixed by $T_k$.
    Hence, $X_{ij}$ is also fixed by $T_k$.

\end{proof}

\noindent The following theorem is the main result of this section. It gives
the defining relations in ${\mathcal U}_q(\mathfrak{n}_J)$.

\begin{theorem}

    The quantized nilradical ${\mathcal U}_q(\mathfrak{n}_J)$ is generated by
    the Lusztig root vectors $X_{ij}$ $((i, j) \in \Phi_J)$ and has the
    following defining relations:
    \begin{align}
        X_{ij}X_{\ell m} &=
            \begin{cases}
                qX_{\ell m}X_{ij}
                &
                $\begin{tabularx}{\textwidth}{l}
                    $(\ell < i \text{ and } j = m)$
                    \\
                    or
                    $(\ell = i \text{ and } w_J^{-1}(j) <
                    w_J^{-1}(m))$
                \end{tabularx}$
                \\
                \\
                X_{\ell m}X_{ij}
                &
                $\begin{tabularx}{\textwidth}{l}
                    $(\ell < i < w_0^J(j) < w_0^J(m))$
                    \\
                    or
                    $(\ell < m < i < j)$
                    \\
                    or
                    $(\ell \leq r(m) < i < m < j)$
                \end{tabularx}$
                \\
                \\
                X_{\ell m}X_{ij} + \widehat{q}X_{\ell j}X_{im}
                &
                $\begin{tabularx}{\textwidth}{l}
                    $(\ell < i \leq r(m) < j < m)$
                    \\
                    or
                    $(\ell < i \leq r(m) < m \leq r(j) < j)$
                \end{tabularx}$
                \\
                \\
                q^{-1}X_{\ell m}X_{ij} + X_{(\ell j), m}
                &
                $\begin{tabularx}{\textwidth}{l}
                    $(\ell < m = i < j)$
                \end{tabularx}$
            \end{cases}
        \end{align}
        \noindent where $X_{(\ell j), m} := (-q)^{m - r(m) - 1} X_{\ell j} +
        \widehat{q}\sum_{r(m) < k < m}(-q)^{m - k - 1} X_{kj} X_{\ell k}$.

\end{theorem}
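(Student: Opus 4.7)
The plan is to verify each of the four cases of the commutation relations separately, reducing each one to an identity in Lemma \ref{EE} by means of Lemma \ref{lemma 1} (which expresses every root vector $X_{ij}$ as a nested $q^{-1}$-commutator) and Lemma \ref{T_X} (which describes how Lusztig's symmetries act on these root vectors). That the listed relations form a complete presentation of ${\mathcal U}_q(\mathfrak{n}_J)$ will follow from the Levendorskii-Soibelman straightening rule together with the PBW basis \ref{PBW basis}, since that rule forces any additional relation to be a consequence of the ones expressing each $X_{ij}X_{\ell m}$ in terms of the PBW-ordered monomials.

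For Case 1 (the $q$-commuting pairs): when $j = m$ both $X_{ij}$ and $X_{\ell m}$ share the tail $r(j)+1,\dots,j-1$, which I peel off by iterated application of $T_{j-1}, T_{j-2},\dots, T_{r(j)+1}$ using Lemma \ref{T_X}(1); this reduces the relation to Lemma \ref{EE}(2). When $\ell = i$ with $w_J^{-1}(j) < w_J^{-1}(m)$, a parallel reduction using Lemma \ref{T_X}(2) brings us to Lemma \ref{EE}(1). For Case 2 (genuine commutation), the three sub-conditions correspond to pairs whose underlying index supports are disjoint in a controlled way; Lemma \ref{T_X}(3) pushes the relation into a setting where one of Lemma \ref{EE}(3)--(6) applies directly. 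For Case 3 (the $\widehat{q}X_{\ell j}X_{im}$ correction), after aligning indices via Lemma \ref{T_X}(1)--(2) the relation reduces to one of the base $\widehat{q}$-identities in Lemma \ref{EE}(7)--(8).

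The main obstacle will be Case 4, where $\ell < m = i < j$ and the right-hand side carries the sum
\[
X_{(\ell j), m} = (-q)^{m - r(m) - 1} X_{\ell j} + \widehat{q}\sum_{r(m) < k < m}(-q)^{m - k - 1} X_{kj} X_{\ell k}.
\]
Here I plan to argue by induction on the height $m - r(m)$ of the row-raising index within its block. The base case $m = r(m) + 1$ has a single-term right-hand side and reduces directly to Lemma \ref{EE}(7) or (8) after transporting via Lemma \ref{T_X}. For the inductive step, I apply $T_{m-1}$, which by Lemma \ref{T_X}(2) raises the relevant row index, to the relation at height $m-1$. Tracking how Lusztig's symmetry redistributes products of Chevalley generators — in particular, using the $-q^{-1}$ twist built into the definition of $T_i$ on adjacent $E_j$'s — produces an additional summand of the form $(-q)^{m-k-1} X_{kj} X_{\ell k}$ with $k = m - 1$, so that iterating assembles the full formula for $X_{(\ell j), m}$ with the signed $q$-powers exactly as stated. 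Once Case 4 is verified, the combination of the four cases together with the straightening rule completes the proof.
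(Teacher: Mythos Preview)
Your overall strategy matches the paper's: reduce each commutation relation to an identity from Lemma~\ref{EE} by transporting via Lusztig symmetries (Lemmas~\ref{lemma 1} and~\ref{T_X}), and then invoke the PBW basis and the Levendorskii--Soibelman rule to conclude that these relations present the algebra. For Cases~1--3 your sketch is essentially what the paper does, though you should be aware that several subcases split further according to whether $r(j) = r(m)$ (for instance, the case $\ell = i$ with $w_J^{-1}(j) < w_J^{-1}(m)$ uses Lemma~\ref{EE}(\ref{EE q-commute, ascending}) when $r(j)=r(m)$ but needs a longer composition $\psi_1$ and Lemma~\ref{EE}(\ref{EE q-commute, descending}) otherwise). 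These are routine refinements.

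The genuine gap is in your inductive step for Case~4. You propose to apply $T_{m-1}$ to the height-$(m-1)$ relation, citing Lemma~\ref{T_X}(\ref{T_X row raise}) to send $X_{m-1,j} \mapsto X_{m,j}$. But $T_{m-1}$ does \emph{not} send $X_{\ell,m-1}$ to $X_{\ell m}$: the index $m-1$ equals the second subscript of $X_{\ell,m-1}$, so none of the cases of Lemma~\ref{T_X} applies, and a direct computation (e.g.\ via Lemma~\ref{T_E}(\ref{T_E descending, prepend})) shows $T_{m-1}(X_{\ell,m-1})$ involves the bracket $\mathbf{E}_{m-1,m-2}$ rather than $\mathbf{E}_{m-2,m-1}$, which is not $X_{\ell m}$. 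So the transported identity is not the height-$m$ relation plus a single correction term, and your mechanism for producing the new summand with $k=m-1$ breaks down. The paper's inductive step is different: after conjugating by an auxiliary automorphism $\psi_7$, a short braid-group calculation gives the identity $[X_{ij},X_{\ell m}] = [X_{\ell,m-1},X_{i-1,j}]$; expanding the right-hand side and substituting the inductive hypothesis for $X_{\ell,m-1}X_{i-1,j}$ then yields the extra $\widehat{q}\,X_{m-1,j}X_{\ell,m-1}$ term with the correct coefficient. (Incidentally, the base case $m = r(m)+1$ does not use Lemma~\ref{EE}(\ref{EE qhat relation}) or~(\ref{EE qhat relation 2}); it follows directly from the recursive formula $X_{i-1,j} = [X_{ij},E_{i-1}]$ established in Lemma~\ref{lemma 1}.)
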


\begin{proof}

    Suppose first $\ell < i$ and $j = m$. Let $r = r(j)$ and let $\varphi =
    \mathbf{T}_{r + 1, \dots, j - 2, j - 1}$.  By part \ref{T_X column drop} of
    Lemma \ref{T_X}, $\varphi (X_{ij}) = X_{i, r + 1}$ and $\varphi (X_{\ell
    m}) = X_{\ell, r + 1}$, and from Lemma \ref{lemma 1}, $X_{i, r + 1} =
    \mathbf{E}_{r, r - 1, \dots, i}$ and $X_{\ell, r + 1} = \mathbf{E}_{r, r -
    1, \dots, \ell}$. By Lemma \ref{EE}, part \ref{EE q-commute, descending},
    $\varphi(X_{ij}) \varphi(X_{\ell m}) = q \varphi(X_{\ell m})
    \varphi(X_{ij})$. Since $\varphi$ is an automorphism of ${\mathcal
    U}_q(\mathfrak{sl}(n))$, $X_{ij}X_{\ell m} = qX_{\ell m} X_{ij}$.

    Now suppose $\ell = i$ and $w_{J}^{-1}(j) < w_J^{-1}(m)$. If $r(j) = r(m) =
    r$, then $j < m$. In this case, let $\varphi = \mathbf{T}_{r - 1, r -
    2,\dots, i}$.  By part \ref{T_X row raise} of Lemma \ref{T_X},
    $\varphi(X_{ij}) = X_{rj}$ and $\varphi(X_{\ell m}) = X_{rm}$, and from
    Lemma \ref{lemma 1}, $X_{rj} = \mathbf{E}_{r, r + 1,\dots, j - 1}$ and
    $X_{rm} = \mathbf{E}_{r, r + 1,\dots, m - 1}$. Part \ref{EE q-commute,
    ascending} of Lemma \ref{EE} implies $\varphi(X_{ij})\varphi(X_{\ell m}) =
    q\varphi(X_{\ell m})\varphi(X_{ij})$.  Therefore $X_{ij}X_{\ell m} = q
    X_{\ell m} X_{ij}$.  On the other hand, if $r(j) \neq r(m)$, then $r(m) < m
    \leq r(j) < j$.  Let
    \[
        \psi_1 := \mathbf{T}_{r(m) + 1, r(m) + 2,\dots, m - 1} \circ
        \mathbf{T}_{r(j) + 1, r(j) + 2,\dots, j - 1} \circ \mathbf{T}_{r(m) -
        1,\dots, i + 1, i}.
    \]
    \noindent From Lemma \ref{T_X}, $\psi_1(X_{ij}) = X_{r(m), r(j) + 1}$ and
    $\psi_1(X_{\ell m}) = X_{r(m), r(m) + 1}$.  By Lemma \ref{lemma 1},
    $X_{r(m), r(j) + 1} = \mathbf{E}_{r(j), r(j) - 1,\dots, r(m)}$ and
    $X_{r(m), r(m) + 1} = E_{r(m)}$. It follows from the $q$-commutativity
    relation (part \ref{EE q-commute, descending} of Lemma \ref{EE}) that
    $X_{r(m), r(j) + 1}X_{r(m), r(m) + 1} = qX_{r(m), r(m) + 1}X_{r(m), r(j) +
    1}$. Hence, $X_{ij}X_{\ell m} = qX_{\ell m} X_{ij}$.

    Now suppose $\ell < i < w_0^J(j) < w_0^J(m)$. If $r(j) = r(m) = r$, then we
    must have $\ell < i \leq r < m < j$. Let
    \[
        \psi_2 := \mathbf{T}_{r - 2, r - 3,\dots, i - 1} \circ \mathbf{T}_{r +
        1, r + 2,\dots, m - 1} \circ \mathbf{T}_{r - 1, r - 2,\dots, i} \circ
        \mathbf{T}_{m + 1, m + 2,\dots, j - 1} \circ \mathbf{T}_{i - 2, i - 3,
        \dots, \ell}.
    \]
    \noindent By Lemmas \ref{lemma 1} and \ref{T_X},
    \begin{align*}
        &\psi_2(X_{ij}) = X_{r, m + 1} = \mathbf{E}_{r, r + 1,\dots, m},
        \\
        &\psi_2(X_{\ell m}) = X_{r - 1, r + 1} = \mathbf{E}_{r, r - 1}.
    \end{align*}
    \noindent Hence, from part \ref{T_E ascending, prepend} of Lemma \ref{T_E},
    $(T_r^{-1} \circ \psi_2)(X_{ij}) = \mathbf{E}_{r + 1, r + 2,\dots, m}$.
    Furthermore $(T_r^{-1} \circ \psi_2)(X_{\ell m}) = E_{r - 1}$.  However,
    since $\mathbf{E}_{r + 1, r + 2,\dots, m}$ commutes with $E_{r - 1}$, it
    follows that $X_{ij}$ and $X_{\ell m}$ must also commute. On the other
    hand, if $r(j) \neq r(m)$, then we must have $\ell < i \leq r(j) < j \leq
    r(m) < m$.  Let
    \[
        \psi_3 := \mathbf{T}_{r(j) - 1, r(j) - 2,\dots, i} \circ
        \mathbf{T}_{r(j) + 1, r(j) + 2,\dots, j - 1} \circ \mathbf{T}_{i - 2, i
        - 3,\dots, \ell} \circ \mathbf{T}_{r(m) + 1, r(m) + 2,\dots, m - 1}.
    \]
    \noindent By Lemmas \ref{lemma 1} and \ref{T_X},
    \begin{align*}
        &\psi_3(X_{ij}) = X_{r(j), r(j) + 1} = E_{r(j)},
        \\
        &\psi_3(X_{\ell m}) = X_{i - 1, r(m) + 1} = \mathbf{E}_{r(m), r(m) -
        1,\dots, i - 1}.
    \end{align*}
    \noindent By part \ref{EE commute, descending} of Lemma \ref{EE},
    $E_{r(j)}$ and $\mathbf{E}_{r(m), r(m) - 1, \dots, i - 1}$ commute. Hence,
    $X_{ij}$ and $X_{\ell m}$ commute also.

    Next suppose $\ell < m < i < j$. Since $X_{ij} = \mathbf{E}_{r(j), r(j) -
    1,\dots, i, r(j) + 1, r(j) + 2,\dots, j - 1}$ and $X_{\ell m} =
    \mathbf{E}_{r(m), r(m) - 1,\dots, \ell, r(m) + 1, r(m) + 2,\dots, m - 1}$
    (Lemma \ref{lemma 1}) and each of $E_i,\dots, E_{j - 1}$ commutes with each
    of $E_{\ell},\dots, E_{m - 1}$ (\ref{q-Serre 1}), it follows that $X_{ij}$
    commutes with $X_{\ell m}$.

    Next suppose $\ell \leq r(m) < i < m < j$. Therefore, $\ell \leq r(i) =
    r(m) < i < m \leq r(j) < j$.  Let
    \begin{equation*}
        \begin{aligned}
            \psi_4 &:= \mathbf{T}_{r(j) - 1, r(j) - 2,\dots, m} \circ
            \mathbf{T}_{r(m) + 1, r(m) + 2,\dots, m - 1}
            \\
            &
            \hspace{6mm}\circ \mathbf{T}_{m - 2, m - 3,\dots, i} \circ
            \mathbf{T}_{r(m) - 1, r(m) - 2,\dots, \ell} \circ \mathbf{T}_{r(j)
            + 1, r(j) + 2,\dots, j - 1}.
        \end{aligned}
    \end{equation*}
    \noindent From Lemmas \ref{lemma 1} and  \ref{T_X},
    \begin{align*}
        &\psi_4(X_{ij}) = X_{r(j), r(j) + 1} = E_{r(j)},
        \\
        &\psi_4(X_{\ell m}) = X_{r(m), r(m) + 1} = E_{r(m)}.
    \end{align*}
    \noindent Since $r(j) - r(m) > 1$, the $q$-Serre relation \ref{q-Serre 1}
    implies that $E_{r(j)}$ and $E_{r(m)}$ commute. Hence $X_{ij}$ and $X_{\ell
    m}$ commute also.

    Next suppose $\ell < i \leq r(m) < j < m$. Thus $r(j) = r(m) = r$.  Let
    \begin{equation*}
        \begin{aligned}
            \psi_5
            &=
            \mathbf{T}_{r + 2, r + 3,\dots, j} \circ \mathbf{T}_{r - 2, r -
            3,\dots, i - 1} \circ \mathbf{T}_{r + 1, r + 2,\dots, j -1}
            \\
            &
            \hspace{6mm}\circ \mathbf{T}_{r - 1, r - 2,\dots, i} \circ
            \mathbf{T}_{j + 1, j + 2, \dots, m - 1} \circ \mathbf{T}_{i - 2, i
            - 3,\dots, \ell}.
        \end{aligned}
    \end{equation*}
    \noindent By Lemmas \ref{lemma 1} and \ref{T_X},
    \begin{align*}
        &\psi_5(X_{ij}) = X_{r, r + 1} = E_r,
        \\
        &\psi_5(X_{\ell m}) = X_{r - 1, r + 2} = \mathbf{E}_{r, r - 1, r + 1},
        \\
        &\psi_5(X_{\ell j}) = X_{r - 1, r + 1} = \mathbf{E}_{r, r - 1},
        \\
        &\psi_5(X_{im}) = X_{r, r + 2} = \mathbf{E}_{r, r + 1}.
    \end{align*}
    \noindent Part \ref{EE qhat relation} of Lemma \ref{EE} implies
    \[
        \psi_5(X_{ij}) \psi_5(X_{\ell m}) = \psi_5(X_{\ell m}) \psi_5(X_{ij}) +
        \widehat{q} \psi_5(X_{\ell j}) \psi_5(X_{im}).
    \]
    \noindent Since $\psi_5$ is an automorphism of ${\mathcal
    U}_q(\mathfrak{sl}(n))$, $X_{ij}X_{\ell m} = X_{\ell m}X_{ij} +
    \widehat{q}X_{\ell j}X_{im}$.

    Next suppose $\ell < i \leq r(m) < m \leq r(j) < j$.  Let
    \begin{equation*}
        \begin{aligned}
            \psi_6
            &=
            \mathbf{T}_{r(m) - 2, r(m) - 3,\dots, i - 1} \circ \mathbf{T}_{r(m)
            - 1, r(m) - 2,\dots, i}
            \\
            &
            \hspace{6mm}\circ \mathbf{T}_{r(m) + 1, r(m) + 2,\dots, m - 1}
            \circ \mathbf{T}_{r(j) + 1, r(j) + 2,\dots, j - 1} \circ
            \mathbf{T}_{i - 2, i - 3,\dots, \ell}.
        \end{aligned}
    \end{equation*}
    \noindent By Lemmas \ref{lemma 1} and \ref{T_X},
    \begin{align*}
        &\psi_6(X_{ij}) = X_{r(m), r(j) + 1} = \mathbf{E}_{r(j), r(j) -
        1,\dots, r(m)},
        \\
        &\psi_6(X_{\ell m}) = X_{r(m) - 1, r(m) + 1} = \mathbf{E}_{r(m), r(m) -
        1},
        \\
        &\psi_6(X_{\ell j}) = X_{r(m) - 1, r(j) + 1} = \mathbf{E}_{r(j), r(j) -
        1,\dots, r(m) - 1},
        \\
        &\psi_6(X_{im}) = X_{r(m), r(m) + 1} = E_{r(m)}.
    \end{align*}
    \noindent Next let $\xi = \mathbf{T}_{r(j), r(j) - 1,\dots, r(m) +
    2}^{-1}$.  Lemma \ref{T_E} implies
    \begin{align*}
        &(\xi \circ \psi_6)(X_{ij}) = \mathbf{E}_{r(m) + 1, r(m)},
        \\
        &(\xi \circ \psi_6)(X_{\ell j}) = \mathbf{E}_{r(m) + 1, r(m), r(m)
        - 1},
    \end{align*}
    \noindent whereas \ref{Lusztig symmetries} implies $(\xi \circ
    \psi_6)(X_{\ell m}) = \mathbf{E}_{r(m), r(m) - 1}$ and $(\xi \circ
    \psi_6)(X_{im}) = E_{r(m)}$. From Proposition \ref{braid action 1}, observe
    $T_{r(m) - 1}(\mathbf{E}_{r(m), r(m) - 1}) = E_{r(m)}$, whereas by
    \ref{Lusztig symmetries},
    \begin{align*}
        &T_{r(m) - 1}(\mathbf{E}_{r(m) + 1, r(m)}) = [E_{r(m) + 1},
        \mathbf{E}_{r(m) - 1, r(m)}],
        \\
        &T_{r(m) - 1}(E_{r(m)}) = \mathbf{E}_{r(m) - 1, r(m)}.
    \end{align*}
    \noindent From part \ref{T_E descending, remove} of Lemma \ref{T_E},
    $T_{r(m) - 1}(\mathbf{E}_{r(m) + 1, r(m), r(m) - 1}) = \mathbf{E}_{r(m) +
    1, r(m)}$.  Since $T_{r(m) - 1}$, $\xi$, and $\psi_6$ are automorphisms
    of ${\mathcal U}_q(\mathfrak{sl}(n))$, the composition $\theta : = T_{r(m)
    - 1} \circ \xi \circ \psi_6$ is also an automorphism of ${\mathcal
    U}_q(\mathfrak{sl}(n))$.  Since
    \begin{align*}
        &\theta(X_{ij}) = [E_{r(m) + 1}, \mathbf{E}_{r(m) - 1, r(m)}],
        \\
        &\theta(X_{\ell m}) = \mathbf{E}_{r(m)},
        \\
        &\theta(X_{\ell j}) = \mathbf{E}_{r(m)+ 1, r(m)},
        \\
        &\theta(X_{im}) = \mathbf{E}_{r(m) - 1, r(m)},
    \end{align*}
    \noindent it follows from part \ref{EE qhat relation 2} of Lemma \ref{EE}
    that $X_{ij}X_{\ell m} = X_{\ell m} X_{ij} + \widehat{q} X_{\ell j}
    X_{im}$.

    Finally suppose $\ell < m = i < j$.  Let
    \[
        \psi_7 = \mathbf{T}_{r(j) + 1, r(j) + 2,\dots, j - 1} \circ
        \mathbf{T}_{r(m) - 1, r(m) - 2,\dots, \ell}.
    \]
    \noindent By Lemmas \ref{lemma 1} and \ref{T_X},
    \begin{align*}
        &\psi_7(X_{ij}) = X_{i, r(j) + 1} = \mathbf{E}_{r(j), r(j) - 1,\dots,
        i},
        \\
        &\psi_7(X_{\ell m}) = X_{r(m), m} = \mathbf{E}_{r(m), r(m) + 1,\dots, m
        - 1},
        \\
        &\psi_7(X_{\ell j}) = X_{r(m), r(j) + 1} = \mathbf{E}_{r(j), r(j) -
        1,\dots, r(m)}.
    \end{align*}
    \noindent We proceed now by induction on $m - r(m)$. First suppose $m -
    r(m) = 1$.  Hence $\psi_7(X_{\ell m}) = E_{r(m)}$ and we have
    \begin{align*}
        \psi_7(X_{ij} X_{\ell m} - q^{-1}X_{\ell m} X_{ij})
        &=
        \mathbf{E}_{r(j), r(j) - 1,\dots, i}E_{r(m)} - q^{-1} E_{r(m)}
        \mathbf{E}_{r(j), r(j) - 1,\dots, i}
        \\
        &= \mathbf{E}_{r(j), r(j) - 1,\dots, i - 1}
        \\
        &= X_{i - 1, r(j) + 1}
        \\
        &= \psi_7(X_{\ell j}).
    \end{align*}
    \noindent Next, suppose $m - r(m) > 1$. From Lemmas \ref{lemma 1} and
    \ref{T_X},
    \begin{align*}
        &\psi_7(X_{i - 1, j}) = X_{i - 1, r(j) + 1} = \mathbf{E}_{r(j), r(j) -
        1,\dots, i - 1}
        \\
        &\psi_7(X_{\ell, m - 1}) = X_{r(m), m - 1} = \mathbf{E}_{r(m), r(m) +
        1,\dots, m - 2}.
    \end{align*}
    \noindent Therefore,
    \begin{align*}
        \psi_7([X_{ij}, X_{\ell m}])
        &= \mathbf{T}_{r(m), r(m) + 1,\dots, m - 2}([X_{i, r(j) + 1}, E_{m -
        1}])
        \\
        &= \mathbf{T}_{r(m), r(m) + 1,\dots, m - 2}(\mathbf{E}_{r(j), r(j) -
        1,\dots, m - 1})
        \\
        &= \mathbf{T}_{r(m), r(m) + 1,\dots, m - 2} \circ \mathbf{T}_{r(j),
        r(j) - 1,\dots, m} (E_{m - 1})
        \\
        &= \mathbf{T}_{r(j) r(j) - 1,\dots, m} \circ \mathbf{T}_{r(m), r(m) +
        1,\dots, m - 2} (E_{m - 1})
        \\
        &= \mathbf{T}_{r(j), r(j) - 1,\dots, m} (\mathbf{E}_{r(m), r(m) +
        1,\dots, m - 1})
        \\
        &= [\mathbf{E}_{r(m), r(m) + 1,\dots, m - 2}, \mathbf{E}_{r(j), r(j) -
        1,\dots, m - 1}]
        \\
        &= [X_{r(m), m - 1}, X_{m - 1, r(j) + 1}]
        \\
        &= \psi_7([X_{\ell, m - 1}, X_{i - 1, j}])
        \\
        &= \psi_7(X_{\ell, m - 1} X_{i - 1, j} - q^{-1}X_{i - 1, j}X_{\ell, m -
        1}).
    \end{align*}
    \noindent By the inductive hypothesis, we can replace $X_{\ell, m - 1}X_{i
    - 1, j}$ in the last line above with $qX_{i - 1, j}X_{\ell, m - 1} +
    (-q)^{m - r(m) - 1}X_{\ell j} - q\widehat{q} \sum_{r(m) < k < m - 1}
    (-q)^{m - k - 2} X_{kj}X_{\ell j}$. After making this substitution, the
    desired result follows.

    Since all of the relations given in the statement of this theorem hold in
    the algebra ${\mathcal U}_q(\mathfrak{n}_J)$, these relations are defining
    relations of ${\mathcal U}_q(\mathfrak{n}_J)$ by the PBW basis theorem
    \ref{PBW basis}.

\end{proof}


\begin{thebibliography}{20}\frenchspacing

    \bibitem{AB} A. Berenstein and J. Greenstein, \textit{Double canonical
        bases}, Adv. Math., {\bf 316}, (2017), 381 - 468.

    \bibitem{CP} V. Chari and A. Pressley, \textit{A Guide to Quantum Groups},
        Cambridge University Press, New York, 1994.

    \bibitem{DKP} C. De Concini, V. Kac, and C. Procesi, \textit{Some quantum
        analogues of solvable Lie groups}, In: Geometry and analysis (Bombay,
        1992), pp. 41-65, Tata Inst. Fund. Res., Bombay, 1995.

    \bibitem{Doi Takeuchi} Y. Doi and M. Takeuchi, \textit{Cleft comodule
        algebras for a bialgebra}, Comm. Alg., {\bf 14}, (1986), 801 - 817.

    \bibitem{GLS} C. Gei\ss, B. Leclerc, and J. Schr\"oer, \textit{Cluster
        structures on quantum coordinate rings}, Selecta Math. (N.S.), {\bf
        19}, (2013), 337 - 397.

    \bibitem{Goodearl Yakimov} K. Goodearl and M. Yakimov, \textit{Quantum
        cluster algebra structures on quantum nilpotent algebras}, Memoirs
        Amer. Math. Soc., {\bf 247} (2017), no. 1169, vii + 119pp.

    \bibitem{Jaramillo 1} A. Jaramillo, \textit{Quantized coordinate rings of
            the unipotent radicals of the standard Borel subgroups in
            $SL_{n+1}$}, J. of Algebra., {\bf 413}, (2014), 226 - 246.

    \bibitem{Jaramillo 2} A. Jaramillo, \textit{Unipotent radicals of the
        standard Borel and parabolic subgroups in quantum special linear
        groups}, Ph.D. Thesis 2014, Univ. California, Santa Barbara.

    \bibitem{Jantzen} J.C. Jantzen, \textit{Lectures on Quantum groups},
        Graduate Texts in Mathematics 6, Amer. Math. Soc., 1996.

    \bibitem{Kashiwara} M. Kashiwara, \textit{Crystalizing the $q$-analogue of
        universal enveloping algebras}, Commun. Math. Phys., {\bf 133}, (1990),
        249 - 260.

    \bibitem{Kimura} Y. Kimura, \textit{Remarks on quantum unipotent subgroups
        and the dual canonical basis}, Pacific J. Math., {\bf 286}, (2017), 125
        - 151.

    \bibitem{KS} A. Kliymk and K. Schm\"udgen, \textit{Quantum Groups and their
        Representations}, Texts and Monographs in Physics, Berlin,
        Springer-Verlag, 1997.

    \bibitem{L} G. Lusztig, \textit{Introduction to quantum groups}, Progr.
        Math. 110, Birkh\"auser, 1993.

    \bibitem{Lusztig} G. Lusztig, \textit{Canonical bases arising from
        quantized enveloping algebras}, J. Amer. Math. Soc., {\bf 3}, (1990),
        447 - 498.

    \bibitem{LS} S.Z. Levendorskii and Yan Soibelmann, \textit{Algebras of
        functions on compact quantum groups, Schubert cells and quantum tori},
        Comm. Math. Phys., {\bf 139}, (1991), 141-170.

    \bibitem{MC} A. M\'eriaux and G. Cauchon, \textit{Admissible diagrams in
            $U_q^w(\mathfrak{g})$ and combinatoric properties of Weyl groups},
            Represent. Theory, {\bf 14}, (2010), 645 - 687.

    \bibitem{Montgomery} S. Montgomery, \textit{Hopf algebras and their actions
        on rings}, CBMS 82, Amer. Math. Soc., 1993.

    \bibitem{Tanisaki} T. Tanisaki, \textit{Modules over quantized coordinate
        algebras and PBW-bases}, J. Math. Soc. Japan, {\bf 69}, (2017), 1105 -
        1156.

    \bibitem{Y} M. Yakimov, \textit{Invariant prime ideals in quantizations of
        nilpotent Lie algebras}, Proc. London Math. Soc. (3) {\bf 101} (2010),
        no.  2, 454-476.

\end{thebibliography}
\end{document}